\documentclass[reqno, 12pt,a4letter]{amsart}
\usepackage{amsmath,amsxtra,amssymb,latexsym, amscd,amsthm}
\usepackage{graphicx,color}
\usepackage[utf8]{inputenc}
\usepackage[mathscr]{euscript}
\usepackage{mathrsfs}
\usepackage[english]{babel}
\usepackage{color}
\setlength{\parindent}{12pt}
\setlength{\parskip}{1.0pt}

\setlength{\oddsidemargin}{-.1cm}
\setlength{\evensidemargin}{-.1cm}

\setlength{\textwidth}{6.5in}
\setlength{\textheight}{9.2in}
\setlength{\headheight}{0in}

\setlength{\topmargin}{-1.7cm}
\setlength{\headsep}{1.2cm}
\setlength{\footskip}{.7in}

\setlength{\baselineskip}{12pt}

\newtheorem {theorem}{Theorem}[section]
\newtheorem {corollary}{Corollary}[section]

\newtheorem {lemma}{Lemma}[section]

\newtheorem {example}{Example}[section]
\newtheorem {definition}{Definition}[section]
\newtheorem {remark}{Remark}[section]

\def\ar{a\kern-.370em\raise.16ex\hbox{\char95\kern-0.53ex\char'47}\kern.05em}
\def\ees{{\accent"5E e}\kern-.385em\raise.2ex\hbox{\char'23}\kern-.08em}
\def\eex{{\accent"5E e}\kern-.470em\raise.3ex\hbox{\char'176}}
\def\AR{A\kern-.46em\raise.80ex\hbox{\char95\kern-0.53ex\char'47}\kern.13em}
\def\EES{{\accent"5E E}\kern-.5em\raise.8ex\hbox{\char'23 }}
\def\EEX{{\accent"5E E}\kern-.60em\raise.9ex\hbox{\char'176}\kern.1em}
\def\ow{o\kern-.42em\raise.82ex\hbox{
  \vrule width .12em height .0ex depth .075ex \kern-0.16em \char'56}\kern-.07em}
\def\OW{O\kern-.460em\raise1.36ex\hbox{
\vrule width .13em height .0ex depth .075ex \kern-0.16em \char'56}\kern-.07em}
\def\UW{U\kern-.42em\raise1.36ex\hbox{
\vrule width .13em height .0ex depth .075ex \kern-0.16em \char'56}\kern-.07em}
\def\DD{D\kern-.7em\raise0.4ex\hbox{\char '55}\kern.33em}

\pagestyle{plain}

\title{Tangencies and Polynomial Optimization}

\author{TI\EES N-S\OW N PH\d{A}M}
\address{Department of Mathematics, University of Dalat, 1 Phu Dong Thien Vuong, Dalat, Vietnam}
\email{sonpt@dlu.edu.vn}

\date{ \today}
\subjclass[2010]{14P15~$\cdot$~90C26~$\cdot$~90C30}

\keywords{Boundedness, Coercivity, Compactness, Critical points, Existence of minimizers, Polynomial, Semi-Algebraic, Stability, Sub-levels, Tangencies}
\thanks{The author is partially supported by Vietnam National Foundation for Science and Technology Development (NAFOSTED)}

\begin{document}
\maketitle

\begin{abstract}
Given a polynomial function $f \colon \mathbb{R}^n \rightarrow \mathbb{R}$ and a unbounded basic closed semi-algebraic set $S \subset \mathbb{R}^n,$ 
in this paper we show that the conditions listed below are characterized exactly in terms of the so-called {\em tangency variety} of $f$ on $S$: 
\begin{itemize}
\item The $f$ is bounded from below on $S;$
\item The $f$ attains its infimum on $S;$
\item The sublevel set $\{x \in S \ | \ f(x) \le \lambda\}$ for $\lambda \in \mathbb{R}$ is compact;
\item The $f$ is coercive on $S.$
\end{itemize}
Besides, we also provide some stability criteria for boundedness and coercivity of $f$ on $S.$
\end{abstract}

\section{Introduction}

Let $f \colon \mathbb{R}^n \rightarrow \mathbb{R}$ be a polynomial function and $S$ a unbounded basic closed semi-algebraic subset of $\mathbb{R}^n.$ Consider the optimization problem
\begin{equation} \label{Problem}
\textrm{minimize } \ f(x) \quad \textrm{ for all } \quad x \in S. \tag{P}
\end{equation}
In this paper we are interested in the following questions:
\begin{enumerate}
\item When is $f$ bounded from below on $S?$
\item Suppose that $f$ is bounded from below on $S.$ When does the problem~\eqref{Problem} have a solution?
\item When is a sublevel set of the restriction of $f$ on $S$ compact?
\item When is $f$ coercive on $S?$
\item Suppose that $f$ is bounded from below on $S.$ Let $g \colon S \rightarrow \mathbb{R}$ be a continuous function. When is $f + g$ bounded from below on $S?$
\item Suppose that $f$ is coercive on $S.$ Let $g \colon S \rightarrow \mathbb{R}$ be a continuous function. When is $f + g$ coercive on $S?$
\end{enumerate}

These questions are not easy to answer. In fact, concerning the first question, Shor \cite{Shor1987} writes
\begin{quote}
``Checking that a given polynomial function is bounded from below is far from trivial.''
\end{quote}

Nie, Demmel, and Sturmfels in the paper \cite{Nie2006} (see also \cite{Demmel2007})  propose a method for finding the global infimum of a polynomial function via sum of squares relaxations under the assumption that the optimal value is attained; in the conclusion section of the paper,  
the authors write:
\begin{quote}
``This paper proposes a method for minimizing a multivariate polynomial $f(x)$
over its gradient variety. We assume that the infimum $f^*$ is attained. This
assumption is nontrivial, and we do not address the (important and difficult)
question of how to verify that a given polynomial $f(x)$ has this property.''
\end{quote}
Indeed, very recently, Ahmadi and Zhang \cite{Ahmadi2018} showed that the testing attainment of the optimal value of a polynomial optimization problem is strongly NP-hard.



It is well-known that Problem~\eqref{Problem} attains its optimal value provided that one of the following sufficient conditions holds:
\begin{itemize}
\item There is some $\lambda \in \mathbb{R}$ such that the sublevel set 
$\{x \in S \ | \ f(x) \le \lambda\}$ is nonempty compact.
\item The $f$ is coercive on $S.$
\end{itemize}
Again, both of these conditions are strongly NP-hard to test as shown in \cite[Section~3]{Ahmadi2018}.

In other lines of development, we also would like to mention that the coercivity of polynomials defined on basic closed semi-algebraic sets and its relation to the Fedoryuk and Malgrange conditions are analyzed by H\`a and Ph\d{a}m \cite{HaHV2010} (see also \cite{Kim2018}), while a sufficient condition for the coercivity of polynomials on $\mathbb{R}^n$ is provided by Jeyakumar, Lasserre, and Li \cite{Jeyakumar2014}. A connection between the coercivity of polynomials on $\mathbb{R}^n$ and their Newton polytopes is given by Bajbar and Stein \cite{Bajbar2015}. For coercive polynomials, the order of growth at infinity and how this relates to the stability of coercivity with respect to perturbations of the coefficients are considered by Bajbar and Stein \cite{Bajbar2018-1} and by Bajbar and Behrends \cite{Bajbar2018-2}.

In this paper, we show that the questions stated in the beginning of this section can be answered completely based on the information contained in the so-called {\em tangency variety} of $f$ on $S.$ 
It is worth noting that tangencies play an important role in solving numerically polynomial optimization problems, see the papers \cite{HaHV2008-2, HaHV2009-1} and the monograph \cite{HaHV2017} for more details.


The rest of this paper is organized as follows. Some definitions and preliminaries concerning optimality conditions and tangencies are presented in Section~\ref{Section2};
in particular, some properties of tangencies are new and are of interest by themselves. The main results are given in Section~\ref{Section3}. Finally, several examples are provided in Section~\ref{Section4}.

\section{Critical points and tangencies} \label{Section2}

\subsection{Preliminaries}
We start this section with some words about our notation. We suppose $1 \le n \in {\Bbb N}$ and abbreviate $(x_1, \ldots, x_n)$ by $x.$ The space $\mathbb{R}^n$ is equipped with the usual scalar product $\langle \cdot, \cdot \rangle$ and the corresponding Euclidean norm $\| \cdot\|.$ Let $\mathbb{B}_R := \{x \in \mathbb{R}^n \ | \ \|x\| \le R\}$ and  $\mathbb{S}_R := \{x \in \mathbb{R}^n \ | \ \|x\|  = R\}.$ By convention, the minimum of the empty set is $+\infty.$

Recall that a subset of $\mathbb{R}^n$ is called {\em semi-algebraic} if it is a finite union of sets of the form
$$\{x \in \mathbb{R}^n \ | \ f_i(x) = 0, i = 1, \ldots, k; f_i(x) > 0, i = k + 1, \ldots, p\}$$
where all $f_{i}$ are polynomials. A map $f \colon A \subset \mathbb{R}^n \to \mathbb{R}^m$ is said to be {\em semi-algebraic} if its graph
is a semi-algebraic subset in $\mathbb{R}^n \times \mathbb{R}^m.$

The class of semi-algebraic sets is closed under taking finite intersections, finite unions and complements; a Cartesian product of semi-algebraic sets is a semi-algebraic set. Moreover, a major fact concerning the class of semi-algebraic sets is its stability under linear projections; in particular, the closure and interior of a semi-algebraic set are semi-algebraic sets. For more details, we refer the reader to \cite{Bochnak1998} and \cite[Chapter~1]{HaHV2017}.

\subsection{Optimality conditions}

Throughout this paper, let $f, g_i, h_j  \colon \mathbb{R}^n \rightarrow \mathbb{R},$ $i = 1, \ldots, l, j = 1, \ldots, m,$ be polynomial functions and assume that the set
$$S :=\{ x \in \mathbb{R}^n \ | \ g_1(x) = 0, \ldots, g_l(x) = 0, \ h_1(x) \ge 0, \ldots, h_m(x) \ge 0\}$$
is nonempty and unbounded. It is well-known that the standard first-order necessary conditions for optimality in Problem~\eqref{Problem} are the following.

\begin{theorem}[Fritz-John optimality conditions] \label{FritzJohnOptimalityConditions}
If $x \in S$ is an optimal solution of Problem~\eqref{Problem},  then there exist real numbers $\kappa, \lambda_i, i = 1, \ldots, l,$ and $\nu_j, j = 1, \ldots, m,$ not all zero, such that
\begin{eqnarray*}
&&  \kappa \nabla  f({x})  - \sum_{i = 1}^{l} \lambda_i \nabla g_i({x}) - \sum_{j = 1}^{m} \nu_j \nabla h_j({x})  = 0, \\
&&  \nu_j h_j({x}) = 0, \ \nu_j \ge 0, \ \textrm{ for } j = 1, \ldots, m.
\end{eqnarray*}
\end{theorem}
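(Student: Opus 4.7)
The plan is to apply the classical quadratic penalty argument. I would replace Problem~\eqref{Problem} by a sequence of unconstrained problems whose penalty parameter enforces the constraints in the limit, read off Lagrange-type multipliers from their first-order conditions, normalize, and pass to the limit. The virtue of this approach is that it needs no constraint qualification, which is precisely what allows the coefficient $\kappa$ to possibly vanish in the Fritz-John statement.

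Fix $R>0$ small enough that $x$ minimizes $f$ on $S\cap\overline{B}$, where $\overline{B}:=\{y\in\mathbb{R}^n:\|y-x\|\le R\}$ (in the globally-optimal case any $R>0$ works). For each integer $k\ge 1$ introduce the continuous penalty functional $F_k(y):=f(y)+\tfrac{k}{2}\sum_{i=1}^{l}g_i(y)^2+\tfrac{k}{2}\sum_{j=1}^{m}\max\{-h_j(y),0\}^2+\tfrac{1}{2}\|y-x\|^2$, and let $y_k\in\overline{B}$ be a minimizer of $F_k$ on the compact set $\overline{B}$. The inequality $F_k(y_k)\le F_k(x)=f(x)$ forces both penalty terms to vanish as $k\to\infty$, so any accumulation point $y^*$ of $(y_k)$ satisfies $g_i(y^*)=0$ and $h_j(y^*)\ge 0$, i.e.\ $y^*\in S\cap\overline{B}$. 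Combining $f(y^*)+\tfrac{1}{2}\|y^*-x\|^2\le f(x)$ with the optimality of $x$ then forces $y^*=x$, so $y_k\to x$ and, for $k$ large, $y_k$ lies in the interior of $\overline{B}$.

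For such $k$ the unconstrained condition $\nabla F_k(y_k)=0$ reads $\nabla f(y_k)+k\sum_i g_i(y_k)\nabla g_i(y_k)-k\sum_j\max\{-h_j(y_k),0\}\nabla h_j(y_k)+(y_k-x)=0$. Setting $\lambda_i^{(k)}:=-k g_i(y_k)$, $\nu_j^{(k)}:=k\max\{-h_j(y_k),0\}\ge 0$, and normalizing by $\mu_k:=\sqrt{1+\sum_i(\lambda_i^{(k)})^2+\sum_j(\nu_j^{(k)})^2}$, the tuple $(1,\lambda^{(k)},\nu^{(k)})/\mu_k$ lies on the unit sphere and admits a convergent subsequence with limit $(\kappa,\lambda,\nu)$ of norm one; in particular $(\kappa,\lambda,\nu)\ne 0$, with $\kappa\ge 0$ and $\nu_j\ge 0$. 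Passing to the limit in the gradient equation divided by $\mu_k$ (the right-hand side $(x-y_k)/\mu_k$ tends to zero since $\mu_k\ge 1$) yields $\kappa\nabla f(x)-\sum_i\lambda_i\nabla g_i(x)-\sum_j\nu_j\nabla h_j(x)=0$. Complementary slackness $\nu_j h_j(x)=0$ is automatic: if $h_j(x)>0$ then by continuity $h_j(y_k)>0$ for $k$ large, and therefore $\nu_j^{(k)}=0$ eventually.

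There is no deep obstacle here; the main subtlety is ensuring that $y_k$ is eventually interior to $\overline{B}$ so the unconstrained first-order condition is available. The strictly convex perturbation $\tfrac{1}{2}\|y-x\|^2$ built into $F_k$ is inserted exactly for this purpose---it pins down $x$ as the unique accumulation point of $(y_k)$ and so places $y_k$ into the interior of the ball for all $k$ sufficiently large.
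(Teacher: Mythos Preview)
Your penalty-function argument is correct and is one of the standard textbook proofs of the Fritz--John conditions. The steps are all sound: the squared hinge penalties $\max\{-h_j(y),0\}^2$ are $C^1$ with derivative $-2\max\{-h_j(y),0\}\nabla h_j(y)$, the auxiliary term $\tfrac12\|y-x\|^2$ forces $y_k\to x$ exactly as you say, and the normalization-compactness passage to the limit delivers a nontrivial multiplier vector with $\kappa\ge 0$ and $\nu_j\ge 0$.

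There is nothing to compare against here: the paper does not supply a proof of this theorem. It is stated as a classical background result (the first-order necessary conditions) and immediately used, with no argument or even a reference given in the theorem environment. So your write-up is not redundant with anything in the paper; it simply fills in a proof the author chose to take for granted.
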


Notice that if $\kappa = 0,$ the above conditions are not very informative about a minimizer and so, we usually make an assumption called a constraint qualification to ensure that $\kappa \ne 0.$ A constraint qualification--probably the one most often used in the design of algorithms--is defined as follows.

\begin{definition} {\rm
We say that the {\em linear independence constraint qualification} {\rm (LICQ)} holds on $S,$ if for every $x \in S,$ the set of vectors
$$\nabla g_i(x) \ \textrm{ and } \ \nabla h_j(x) \ \textrm{ for } \  i = 1, \ldots, l, \ j \in J(x),$$
is linearly independent, where $$J(x) := \{j \in \{1, \ldots, m\} \ | \ h_j(x) = 0 \}$$ is called the set of {\em active constraint indices.}
}\end{definition}

\begin{remark}{\rm
By the Sard theorem, it is not hard to show that the condition (LICQ) holds generically (see \cite{Bolte2018, HaHV2017, Spingarn1979}).
}\end{remark}

Under the assumption that {\rm (LICQ)} holds on $S,$ we may obtain the more informative optimality conditions due to Karush, Kuhn and Tucker (and called the KKT optimality conditions) where the real number $\kappa$ in Theorem~\ref{FritzJohnOptimalityConditions} can be taken to be $1.$

\begin{theorem}[KKT optimality conditions] \label{KKTOptimalityConditions}
Let {\rm (LICQ)} hold on $S.$ If  $x \in S$ be an optimal solution of Problem~\eqref{Problem}, then there exist real numbers $\lambda_i, i = 1, \ldots, l,$ and $\nu_j, j = 1, \ldots, m,$ such that
\begin{eqnarray*}
&&  \nabla  f({x})  - \sum_{i = 1}^{l} \lambda_i \nabla g_i({x}) - \sum_{j = 1}^{m} \nu_j \nabla h_j({x})  = 0, \\
&&  \nu_j h_j({x}) = 0, \ \nu_j \ge 0, \ \textrm{ for } j = 1, \ldots, m.
\end{eqnarray*}
\end{theorem}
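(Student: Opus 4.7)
The strategy is to deduce the KKT optimality conditions directly from the Fritz-John optimality conditions (Theorem~\ref{FritzJohnOptimalityConditions}) by showing that the (LICQ) hypothesis forces the leading multiplier $\kappa$ to be nonzero; once that is established, rescaling by $\kappa$ yields the desired form with $\kappa$ replaced by $1$.

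First I would apply Theorem~\ref{FritzJohnOptimalityConditions} to the optimal solution $x \in S$ to obtain real numbers $\kappa, \lambda_1, \ldots, \lambda_l, \nu_1, \ldots, \nu_m$, not all zero, with $\nu_j \ge 0$, satisfying both the gradient identity and the complementary slackness conditions $\nu_j h_j(x) = 0$. The complementary slackness relations already force $\nu_j = 0$ whenever $h_j(x) > 0$, i.e.\ whenever $j \notin J(x)$. Hence the Fritz-John identity can be rewritten as
$$\kappa \nabla f(x) = \sum_{i=1}^{l} \lambda_i \nabla g_i(x) + \sum_{j \in J(x)} \nu_j \nabla h_j(x).$$

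The key step is a contradiction argument showing $\kappa \ne 0$. Suppose instead that $\kappa = 0$. Then the displayed identity exhibits a linear dependence among the vectors in the family $\{\nabla g_i(x)\}_{i=1}^{l} \cup \{\nabla h_j(x)\}_{j \in J(x)}$. By (LICQ), this family is linearly independent at $x$, so every $\lambda_i$ and every $\nu_j$ with $j \in J(x)$ must vanish. Together with $\kappa = 0$ and the already established $\nu_j = 0$ for $j \notin J(x)$, this would mean all multipliers are zero, contradicting Theorem~\ref{FritzJohnOptimalityConditions}. Consequently $\kappa \ne 0$, and under the standard convention $\kappa \ge 0$ built into Fritz-John we actually obtain $\kappa > 0$.

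Finally, I would divide the gradient identity by $\kappa$ and relabel $\lambda_i / \kappa \mapsto \lambda_i$ and $\nu_j / \kappa \mapsto \nu_j$; because $\kappa > 0$, the nonnegativity $\nu_j \ge 0$ and the complementary slackness $\nu_j h_j(x) = 0$ are preserved by this rescaling, so the resulting identity is exactly the KKT condition as stated. The only mild subtlety in the plan is the sign convention on $\kappa$ in Fritz-John; once this is fixed, the argument above is essentially the standard one, and the work is concentrated entirely in the linear-independence step that rules out $\kappa = 0$.
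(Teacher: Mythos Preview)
Your argument is the standard derivation and is correct; it is also exactly what the paper has in mind. Note that the paper does not give a proof of Theorem~\ref{KKTOptimalityConditions} at all: it simply records the result as classical, explaining in the text preceding the statement that under (LICQ) ``the real number $\kappa$ in Theorem~\ref{FritzJohnOptimalityConditions} can be taken to be $1$.'' Your contradiction argument is precisely the justification of that sentence.

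One small point worth flagging, which you already noticed: the paper's statement of Theorem~\ref{FritzJohnOptimalityConditions} omits the usual condition $\kappa \ge 0$. Your proof needs $\kappa > 0$ (not merely $\kappa \ne 0$) so that dividing through preserves $\nu_j \ge 0$. The standard Fritz--John theorem does give $\kappa \ge 0$, so this is a gap in the paper's statement rather than in your reasoning; your closing remark about the ``sign convention on $\kappa$'' correctly identifies where the issue lies.
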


The KKT optimality conditions lead to the following notion.

\begin{definition}{\rm
We define the {\em set of critical points of $f$ on $S$} to be the set:
\begin{eqnarray*}
\Sigma(f, S) \ :=\  \{x \in S  & | &  \textrm{there exist } \lambda_i,
\nu_j \in {\Bbb R} \textrm{ such that }
\\ && \nabla  f(x)  - \sum_{i = 1}^{l} \lambda_i \nabla g_i(x) -
\sum_{j = 1}^{m} \nu_j \nabla h_j(x) = 0, \\
&&  \nu_j h_j(x) = 0, j = 1, \ldots, m \}.
\end{eqnarray*}
}\end{definition}

\begin{remark}{\rm
(i) In the case $S = \mathbb{R}^n,$ we have
$$\Sigma(f, \mathbb{R}^n) = \{x \in \mathbb{R}^n \ | \ \nabla f(x) = 0 \},$$
which is the usual {\em set of critical points} of $f.$

(ii) In light of Theorem~\ref{KKTOptimalityConditions}, if {\rm (LICQ)} holds on $S,$ then every optimal solution of Problem~\eqref{Problem} belongs to $\Sigma(f, S).$ Moreover, we have:
}\end{remark}

\begin{lemma}\label{Lemma21}
If {\rm (LICQ)} holds on $S,$ then $f(\Sigma(f, S))$ is a finite set.
\end{lemma}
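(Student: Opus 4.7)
The plan is to stratify $S$ by active constraint set and then apply the semi-algebraic version of Sard's theorem on each stratum, using (LICQ) to identify the KKT condition with the usual critical point condition on the stratum.

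More precisely, for each subset $J \subset \{1,\ldots,m\}$ I would introduce the stratum
\[
S_J \ :=\ \{x \in S \ |\ h_j(x) = 0 \text{ for } j \in J, \ h_j(x) > 0 \text{ for } j \notin J\},
\]
so that $S$ is the disjoint union of the $S_J$ over the $2^m$ choices of $J$. Under (LICQ), the gradients $\nabla g_i(x)$ ($i=1,\ldots,l$) together with $\nabla h_j(x)$ ($j\in J$) are linearly independent at every $x \in S_J$, so by the implicit function theorem $S_J$ is a smooth semi-algebraic manifold, and the tangent space $T_x S_J$ is exactly the orthogonal complement of the span of those gradients.

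Next, I would check that a point $x \in S_J$ lies in $\Sigma(f,S)$ if and only if $\nabla f(x)$ is orthogonal to $T_x S_J$, i.e.\ $x$ is a critical point of the restriction $f|_{S_J}$ in the classical differential-geometric sense. The ``if'' direction is immediate from the description of $T_x S_J^\perp$; the ``only if'' direction uses the fact that complementary slackness $\nu_j h_j(x) = 0$ forces $\nu_j = 0$ for $j \notin J$, so the Lagrangian relation reduces to an equation involving only the active gradients, which again lie in $T_x S_J^\perp$. Thus
\[
\Sigma(f,S) \cap S_J \ =\ \{x \in S_J \ |\ \nabla (f|_{S_J})(x) = 0\}.
\]

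Finally, I would invoke the semi-algebraic Sard theorem: for a smooth semi-algebraic map between semi-algebraic manifolds, the set of critical values is a semi-algebraic subset of $\mathbb{R}$ of dimension zero, hence finite. Applied to $f|_{S_J}$ this gives that $f(\Sigma(f,S) \cap S_J)$ is finite for every $J$, and since there are only finitely many subsets $J$ of $\{1,\ldots,m\}$,
\[
f(\Sigma(f,S)) \ =\ \bigcup_{J \subset \{1,\ldots,m\}} f(\Sigma(f,S) \cap S_J)
\]
is a finite union of finite sets, hence finite.

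The main obstacle is the routine but slightly delicate verification that (LICQ) really does turn the KKT system into an orthogonality condition with respect to $T_x S_J$, so that the multipliers $\lambda_i,\nu_j$ cease to play a role and one is left with an honest Sard-type statement on a smooth manifold. Once that identification is in place, the rest is a direct application of the semi-algebraic Sard theorem together with the finiteness of the stratification indexed by $J$.
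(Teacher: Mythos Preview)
Your argument is correct. The stratification by active index set $J$, the identification (via LICQ and complementary slackness) of $\Sigma(f,S)\cap S_J$ with the ordinary critical locus of $f|_{S_J}$, and the application of the semi-algebraic Sard theorem together give exactly the conclusion; the only point worth stating a bit more carefully is that the definition of $\Sigma(f,S)$ in this paper does \emph{not} impose the sign condition $\nu_j\ge 0$, so the ``if'' direction of your identification really is immediate.

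As for comparison with the paper: the paper does not prove Lemma~\ref{Lemma21} in-line at all---it simply refers to \cite[Lemma~3.3]{HaHV2009-1}. Your self-contained argument via stratification and semi-algebraic Sard is the standard route to this kind of statement (and is in fact essentially what is done in the cited reference), so there is no meaningful methodological difference to discuss; you have supplied the proof the paper chose to outsource.
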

\begin{proof}
See \cite[Lemma~3.3]{HaHV2009-1}.
\end{proof}

\subsection{Tangencies}

Consider Problem \eqref{Problem}. By definition, we have
\begin{eqnarray*}
\inf_{x \in S} f(x) &\le& \inf_{x \in \Sigma(f, S)} f(x),
\end{eqnarray*}
and the inequality can be strict as shown in the following example.

\begin{example}\label{Example21} {\rm
Let $S := \mathbb{R}^2$ and $f(x, y) := (xy - 1)^2 + y^2.$ We have $f > 0$ on $\mathbb{R}^2$ and
\begin{eqnarray*}
f(k, \frac{1}{k})  &=& \frac{1}{k^2} \to 0 \quad \textrm{ as } \quad k \to \infty.
\end{eqnarray*}
Hence
\begin{eqnarray*}
\inf_{(x, y) \in \mathbb{R}^2} f(x, y) &=& 0.
\end{eqnarray*}
Note that
\begin{eqnarray*}
\nabla f(x, y) = (0, 0) \quad \Longleftrightarrow \quad (x, y) = (0, 0).
\end{eqnarray*}
Therefore,
\begin{eqnarray*}
\inf_{(x, y) \in \mathbb{R}^2} f(x, y) & = &  0 \ < \ 1 \ = \ f(0, 0).
\end{eqnarray*}
}\end{example}

Assume that Problem~\eqref{Problem} has no optimal solution. Then there exists a sequence $\{x^k\} \subset S$ such that
\begin{eqnarray*}
\lim_{k \to +\infty} \|x^k\| = +\infty  \quad & \textrm{ and } & \quad \lim_{k \to +\infty} f(x^k)  \ = \ \inf_{x \in S} f(x).
\end{eqnarray*}
Since the set $\{x \in S \ | \ \|x\|^2 = \|x^k\|^2 \}$ is nonempty compact, the optimization problem
\begin{eqnarray*}
\textrm{minimize } \ f(x) \quad \textrm{ subject to } \quad x \in  S \  \textrm{ and } \  \|x\|^2 = \|x^k\|^2
\end{eqnarray*}
has at least an optimal solution, say $y^k.$ In light of Theorem~\ref{FritzJohnOptimalityConditions}, then for each $k,$ there exist $\kappa, \lambda_i, \nu_j, \mu \in \mathbb{R},$ not all zero, such that
\begin{eqnarray*}
&& \kappa \nabla  f(y^k)  - \sum_{i = 1}^{l} \lambda_i \nabla g_i(y^k) - \sum_{j = 1}^{m} \nu_j \nabla h_j(y^k) - \mu y^k = 0,  \textrm{ and } \\
&&  \nu_j h_j({x}) = 0, \ \nu_j \ge 0, \ \textrm{ for } j = 1, \ldots, m.
\end{eqnarray*}
This observation leads to the following notion.

\begin{definition}\label{Definition23}
{\rm By the {\em tangency variety of $f$ on $S$} we mean the set
\begin{eqnarray*}
\Gamma(f, S) :=  \{x \in S &|&  \textrm{there exist $\kappa, \lambda_i, \nu_j, \mu \in \mathbb{R},$ not all zero, such that } \\
&& \kappa \nabla  f(x)  - \sum_{i = 1}^{l} \lambda_i \nabla g_i(x) - \sum_{j = 1}^{m} \nu_j \nabla h_j(x) - \mu x = 0,  \textrm{ and } \\
&&  \nu_j h_j(x) = 0, j = 1, \ldots, m  \}.
\end{eqnarray*}
}\end{definition}

Geometrically, the tangency variety $\Gamma(f, S)$ consists of all points $x \in S$ where the level sets of the restriction of $f$ on $S$ are tangent to the sphere in $\mathbb{R}^n$ centered in the origin with radius $\|x\|.$

\begin{remark}{\rm
(i) In the case $S = \mathbb{R}^n,$ we have
$$\Gamma(f, \mathbb{R}^n) = \{x \in \mathbb{R}^n \ | \ \textrm{rank} \begin{pmatrix} \nabla f(x)  \\ x \end{pmatrix} \le 1 \}.$$

(ii) It is easy to see that $\Sigma(f, S) \subset \Gamma(f, S).$ Furthermore, we have
\begin{eqnarray*}
\inf_{x \in S} f(x) & = & \inf_{x \in \Gamma(f, S)} f(x) \ \le \ \inf_{x \in \Sigma(f, S)} f(x).
\end{eqnarray*}
}\end{remark}

\begin{lemma}\label{Lemma22}
Assume that {\rm (LICQ)} holds on $S.$ Then there exists a real number $R > 0$ such that we have for all $x \in \Gamma(f, S) \setminus \mathbb{B}_R, $
\begin{eqnarray*}
&& \nabla  f(x) - \sum_{i = 1}^{l}\lambda_i \nabla g_i(x) - \sum_{j = 1}^{m}\nu_j \nabla h_j(x) - \mu x =
0, \textrm{ and } \\
&& \nu_j h_j(x) = 0, j = 1, \ldots, m,
\end{eqnarray*}
for some real numbers $\lambda_i, \nu_j,$ and $\mu.$
\end{lemma}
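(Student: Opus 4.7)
The plan is to argue by contradiction using the semialgebraic curve selection lemma at infinity. First, I translate the conclusion into the statement that the ``bad'' set
\[
A := \{ x \in \Gamma(f,S) \mid \text{no tangency multiplier tuple with } \kappa = 1 \text{ exists at } x\}
\]
is bounded. A point $x \in \Gamma(f,S)$ fails to admit a $\kappa=1$ tuple exactly when $\nabla f(x)$ lies outside $\mathrm{span}\bigl(\{\nabla g_i(x)\} \cup \{\nabla h_j(x) : j \in J(x)\} \cup \{x\}\bigr)$, yet the defining tangency relation still holds nontrivially with $\kappa = 0$. Hence, at every $x \in A$, there exist multipliers $(\lambda,\nu,\mu)$, not all zero, with $\nu_j = 0$ for $j \notin J(x)$ and
\[
\sum_{i=1}^{l} \lambda_i \nabla g_i(x) + \sum_{j \in J(x)} \nu_j \nabla h_j(x) + \mu\, x \; = \; 0.
\]
The set $A$ is semialgebraic, so if it were unbounded the curve selection lemma at infinity would supply a real-analytic semialgebraic curve $\varphi \colon (0,\varepsilon) \to A$ with $\|\varphi(t)\| \to +\infty$ as $t \to 0^+$.

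Next I would reduce to a well-behaved setup along $\varphi$. By shrinking $\varepsilon$, the active index set $J(\varphi(t))$ is constant, equal to some fixed $J$, so $g_i(\varphi(t)) \equiv 0$ for all $i$ and $h_j(\varphi(t)) \equiv 0$ for all $j \in J$. Under (LICQ), the vectors $\{\nabla g_i(\varphi(t))\} \cup \{\nabla h_j(\varphi(t)) : j \in J\}$ are linearly independent; combined with the fact that $\varphi(t)$ lies in their span (definition of $A$), the null space of the coefficient matrix for $(\lambda,\nu_J,\mu)$ is one-dimensional. I can therefore select $(\lambda(t),\nu(t),\mu(t))$ analytically (as ratios of minors of a matrix whose entries are semialgebraic in $t$), not all zero. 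If $\mu(t) \equiv 0$, then (LICQ) forces all $\lambda_i(t)$ and $\nu_j(t)$ to vanish, a contradiction; so after possibly shrinking $\varepsilon$ once more, $\mu(t) \ne 0$ on $(0,\varepsilon)$.

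Finally I take the inner product of the relation
\[
\sum_{i} \lambda_i(t)\, \nabla g_i(\varphi(t)) + \sum_{j \in J} \nu_j(t)\, \nabla h_j(\varphi(t)) + \mu(t)\,\varphi(t) = 0
\]
with $\varphi'(t)$. Differentiating the identities $g_i(\varphi(t)) \equiv 0$ and $h_j(\varphi(t)) \equiv 0$ (for $j \in J$) kills every gradient term, leaving $\mu(t)\,\langle \varphi(t), \varphi'(t)\rangle = 0$. Since $\mu(t) \ne 0$, it follows that $\frac{d}{dt}\|\varphi(t)\|^2 = 0$, so $\|\varphi(t)\|$ is constant on $(0,\varepsilon)$, contradicting $\|\varphi(t)\| \to +\infty$. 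Therefore $A$ must be bounded, and any $R$ with $A \subset \mathbb{B}_R$ yields the claimed conclusion.

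The step I expect to require the most care is the analytic selection of the multipliers $(\lambda(t),\nu(t),\mu(t))$ along $\varphi$: one must invoke (LICQ) to pin down the dimension of the solution space and then rule out, by refining the curve, the isolated parameter values at which $\mu(t)$ could vanish. Once that is in place, the differentiation-along-$\varphi$ trick is routine and delivers the contradiction immediately.
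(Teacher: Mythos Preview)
Your argument is correct. The paper itself does not prove this lemma: it simply cites \cite[Lemma~3.2]{HaHV2009-1}. Your proof is self-contained and very much in the spirit of the techniques the paper uses elsewhere (compare the differentiation-along-a-curve argument in Lemma~\ref{Lemma23}): contradiction via the Curve Selection Lemma at infinity, freezing the active index set $J$ along the curve, and then pairing the multiplier relation with $\varphi'(t)$ to force $\|\varphi(t)\|$ constant.

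One small point of presentation: when you write ``combined with the fact that $\varphi(t)$ lies in their span (definition of $A$)'', that inclusion is not literally the definition of $A$. What the definition of $A$ gives you is a nontrivial relation $\sum_i \lambda_i \nabla g_i + \sum_{j\in J}\nu_j \nabla h_j + \mu\,\varphi(t)=0$; it is then (LICQ) that forces $\mu\neq 0$ in any such relation, and only then does $\varphi(t)$ fall into the span of the constraint gradients, making the null space one-dimensional. You do supply exactly this reasoning two sentences later (``If $\mu(t)\equiv 0$, then (LICQ) forces all $\lambda_i(t)$ and $\nu_j(t)$ to vanish''), so the logic is all present---just move that observation up before you invoke one-dimensionality of the null space, and the exposition will be clean.
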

\begin{proof}
See \cite[Lemma~3.2]{HaHV2009-1}.
\end{proof}

Applying Hardt's triviality theorem \cite{Hardt1980} for the semi-algebraic function
$$\| \cdot\| \colon \Gamma(f, S) \rightarrow \mathbb{R}, \quad x \mapsto \|x\|,$$
we find a constant $R > 0$ such that the restriction
$$\| \cdot\| \colon \Gamma(f, S) \setminus \mathbb{B}_{R} \rightarrow (R, + \infty)$$
is a topological trivial fibration. Let $p$ be the  number of connected components of a fiber of this restriction. Then $\Gamma(f, S) \setminus \mathbb{B}_R$ has exactly $p$ connected components, say $\Gamma_1, \ldots, \Gamma_p,$ and each such component is a unbounded semi-algebraic set. Moreover,
for all $t > R$ and all $k = 1, \ldots, p,$ the sets $\Gamma_k \cap \mathbb{S}_t$ are connected. Corresponding to each $\Gamma_k,$ let $$f_k \colon (R, +\infty) \rightarrow \mathbb{R}, \quad t \mapsto f_k(t),$$
be the function defined by $f_k(t) :=  f(x),$ where $x \in \Gamma_k \cap \mathbb{S}_t.$

\begin{lemma}\label{Lemma23} 
Assume that {\rm (LICQ)} holds on $S.$ For all $R$ large enough, the following statements hold:
\begin{enumerate}
\item[{\rm (i)}]  All the functions $f_k$ are well-defined and semi-algebraic.
\item[{\rm (ii)}] Each the function $f_k$ is either constant or strictly monotone.
 \item[{\rm (iii)}] The function $f_k$ is constant if, and only if, $\Gamma_k \subset \Sigma(f, S).$
\end{enumerate}
\end{lemma}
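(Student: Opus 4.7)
For parts (i) and (ii), I would handle them via standard tools. For (i), the plan is to use Lemma~\ref{Lemma22} to show $f$ is constant on every connected fiber $\Gamma_k \cap \mathbb{S}_t$. Given a smooth semi-algebraic curve $\varphi(s)$ in such a fiber, differentiating $\|\varphi\|^2 \equiv t^2$, $g_i \circ \varphi \equiv 0$, and (for each $j$ with $\nu_j \neq 0$) $h_j \circ \varphi \equiv 0$ yields $\langle \varphi, \varphi' \rangle = 0$, $\langle \nabla g_i, \varphi' \rangle = 0$, and $\langle \nabla h_j, \varphi' \rangle = 0$; combined with the Lagrange identity of Lemma~\ref{Lemma22}, these force $\langle \nabla f, \varphi' \rangle = 0$, so $f \circ \varphi$ is constant, and connectedness of the fiber finishes the argument. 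Semi-algebraicity of $f_k$ is a Tarski--Seidenberg projection of the semi-algebraic set $\{(t, s, x) : x \in \Gamma_k, \|x\| = t, f(x) = s\}$. For (ii), the semi-algebraic monotonicity theorem applied to each $f_k$ partitions $(R, +\infty)$ into finitely many intervals of constancy or strict monotonicity; enlarging $R$ beyond all finitely many breakpoints (over $k = 1, \ldots, p$) gives the stated dichotomy on all of $(R, +\infty)$.

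For the easy direction $(\Leftarrow)$ of (iii), Lemma~\ref{Lemma21} forces $f(\Gamma_k) \subseteq f(\Sigma(f, S))$ to be finite, so $f_k$ has finite range and by (ii) must be constant. For the converse, fix $x \in \Gamma_k$ with $\|x\| = t_0$. By Hardt's trivialization $\Gamma_k \simeq F \times (R, +\infty)$, there are points of $\Gamma_k$ arbitrarily close to $x$ whose norms differ from $t_0$, so the Curve Selection Lemma yields an analytic curve $\varphi : [0, \epsilon) \to \Gamma_k$ with $\varphi(0) = x$ and $\|\varphi(s)\| \neq t_0$ for $s \in (0, \epsilon)$. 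Constancy of $f_k$ forces $f \circ \varphi \equiv f_k(t_0)$, whence $\langle \nabla f(\varphi), \varphi' \rangle = 0$. After shrinking $\epsilon$ so that the active set $J(s) := \{j : h_j(\varphi(s)) = 0\}$ is constant on $(0, \epsilon)$ (possible since each $h_j \circ \varphi$ is analytic), the differentiation trick from (i) reduces Lemma~\ref{Lemma22} to $\mu(s) \langle \varphi(s), \varphi'(s) \rangle = 0$. Since $\|\varphi\|$ is analytic and nonconstant, $\langle \varphi, \varphi' \rangle \neq 0$ on a dense subset $D \subseteq (0, \epsilon)$, so $\mu(s) = 0$ and $\varphi(s) \in \Sigma(f, S)$ for $s \in D$.

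The main obstacle I anticipate is the very last step: passing $\varphi(s) \in \Sigma(f, S)$ on $D$ to $x = \varphi(0) \in \Sigma(f, S)$, which is delicate because the active set $J(x)$ at $x$ can strictly contain $J$. The plan is to exploit LICQ, which makes $\{\nabla g_i(\varphi(s)), \nabla h_j(\varphi(s)) : j \in J\}$ linearly independent on a neighbourhood of $0$, so Cramer's rule produces continuous multipliers $\lambda_i(s), \nu_j(s)$ with $\nabla f(\varphi(s)) = \sum_i \lambda_i(s) \nabla g_i(\varphi(s)) + \sum_{j \in J} \nu_j(s) \nabla h_j(\varphi(s))$ on $D$. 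Letting $s \to 0$ gives the same identity at $x$, placing $\nabla f(x)$ in $\operatorname{span}\{\nabla g_i(x), \nabla h_j(x) : j \in J\} \subseteq \operatorname{span}\{\nabla g_i(x), \nabla h_j(x) : j \in J(x)\}$, so $x \in \Sigma(f, S)$ as required.
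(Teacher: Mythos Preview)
Your proof is correct. Parts (i), (ii), and the sufficiency in (iii) essentially match the paper (for $(\Leftarrow)$ you invoke finiteness of $f(\Sigma(f,S))$ via Lemma~\ref{Lemma21}, while the paper instead uses constancy of $f$ on connected components of $\Sigma(f,S)$; either works). The genuine difference is in the necessity direction of (iii). The paper argues by contradiction: assuming some $x^{*}\in\Gamma_k\setminus\Sigma(f,S)$, it uses closedness of $\Sigma(f,S)$ together with the Curve Selection Lemma to produce a curve $\phi$ lying \emph{entirely} in $\Gamma_k\setminus\Sigma(f,S)$, reparametrized so that $\|\phi(t)\|\equiv t$. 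Then $\langle\phi(t),\phi'(t)\rangle=t\neq 0$ identically, so the relation $\mu(t)\langle\phi,\phi'\rangle=0$ forces $\mu\equiv 0$ on the whole interval rather than merely on a dense subset, which immediately places $\phi$ inside $\Sigma(f,S)$ and yields the contradiction. In particular the active-set limit issue you worried about never arises. Your direct approach is also valid, and your use of LICQ to make the multipliers for the index set $J\subseteq J(x)$ unique and continuous (so that the identity passes to the limit $s\to 0$) is sound; it simply costs extra work at precisely the point the paper's contrapositive framing bypasses. One small remark applicable to both (i) and your version of (iii): the multipliers along the curve must be chosen semi-algebraically in $s$ so that $\nu_j(s)h_j(\varphi(s))=0$ can be split piecewise into $\nu_j\equiv 0$ or $h_j\circ\varphi\equiv 0$; the paper secures this by applying the Curve Selection Lemma in the extended space of tuples $(x,\lambda,\nu,\mu)$, and under LICQ you may equally note that the multipliers in Lemma~\ref{Lemma22} are unique, hence automatically semi-algebraic in $x$.
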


\begin{proof}
We choose $R$ large enough so that the conclusion of Lemma~\ref{Lemma22} holds.

(i) Fix $k \in \{1, \ldots, p\}$ and take any $t > R.$ We will show that the restriction of $f$ on $\Gamma_k \cap \mathbb{S}_t$ is constant. To see this, let $\phi \colon [0, 1] \rightarrow {\Bbb R}^n$ be a smooth semi-algebraic curve such that $\phi(\tau) \in \Gamma_k \cap \mathbb{S}_t$ for all $\tau \in [0, 1].$ By definition, we have
\begin{eqnarray} \label{Eqn1}
\|\phi(\tau)\| \equiv t \quad & \textrm{ and } & \quad g_i(\phi(\tau)) \equiv 0, \ i = 1, \ldots, l.
\end{eqnarray}
Moreover, by Lemma~\ref{Lemma22}, there exists a semi-algebraic curve $(\lambda, \nu, \mu)  \colon [0, 1] \rightarrow {\Bbb R}^l \times \mathbb{R}^m \times \mathbb{R}$ such that
\begin{eqnarray}\label{Eqn2}
&& \nabla f(\phi(\tau))  -  \sum_{i = 1}^{l} \lambda_i(\tau) \nabla g_i(\phi(\tau)) - \sum_{j = 1}^{m} \nu_j(t) \nabla h_j(\phi(\tau)) - \mu(\tau) \phi(\tau) \equiv 0,\\
&& \nu_j(\tau) h_j(\phi(\tau)) \equiv 0, \ j = 1, \ldots, m. \label{Eqn3}
\end{eqnarray}

Since the functions $\nu_j$ and $h_j \circ \phi$ are semi-algebraic, it follows from the Monotonicity Lemma (see, for example, \cite[Theorem~1.8]{HaHV2017}) that there is a partition $0 =: \tau_1 < \cdots < \tau_{N} := 1$ of $[0, 1]$ such that on each interval ${(\tau_l, \tau_{l + 1})}$ these functions are smooth and either constant or strictly monotone, for $l \in \{1, \ldots, N - 1\}.$ Then, by \eqref{Eqn3}, we can see that either $\nu_j(\tau) \equiv 0$ or $(h_j \circ \phi)(\tau) \equiv  0$ on ${(\tau_l, \tau_{l + 1})}.$ In particular, we have
$$\nu_j(\tau) \frac{d}{ dt }(h_j \circ \phi)(\tau)  \equiv 0, \quad j = 1, \ldots, m.$$
It follows from \eqref{Eqn1}, \eqref{Eqn2}, and \eqref{Eqn3} that
\begin{eqnarray*}
\frac{d }{d\tau}(f \circ \phi)(\tau)
&=& \langle \nabla  f(\phi(\tau)), \frac{d \phi(\tau)}{d\tau} \rangle \\
&=& \sum_{i = 1}^{l} \lambda_i(\tau) \langle \nabla  g_i(\phi(\tau)), \frac{d
\phi(\tau)}{d\tau} \rangle + \sum_{j = 1}^{m} \nu_j(\tau) \langle \nabla  h_j(\phi(\tau)),
\frac{d \phi(\tau)}{d\tau} \rangle \\ &&  \hspace{5.1cm} + \ \mu(\tau) \langle \phi(\tau), \frac{d \phi(\tau)}{d\tau} \rangle \\
&=& \sum_{i = 1}^{l}  \lambda_i(\tau) \frac{d}{d\tau}(g_i \circ \phi)(\tau) + \sum_{j = 1}^{m}
\nu_j(\tau) \frac{d}{d\tau}(h_j \circ \phi)(\tau) + \mu(\tau) \frac{1}{2} \frac{d \|\phi(\tau)\|^2}{d\tau}\\
&=& 0.
\end{eqnarray*}
So $f$ is constant on the curve $\phi.$

On the other hand, since the set $\Gamma_k \cap \mathbb{S}_t$ is connected semi-algebraic, it is path connected. Hence, any two points in $\Gamma_k \cap \mathbb{S}_t$ can be joined by a piecewise smooth semi-algebraic curve (see \cite[Theorem~1.13]{HaHV2017}). It follows that  the restriction of $f$ on $\Gamma_k \cap \mathbb{S}_t$ is constant. Finally, by the Tarski--Seidenberg Theorem (see, for example, \cite[Theorem~1.5]{HaHV2017}), the function $f_k$ is semi-algebraic.

(ii) By increasing $R$ (if necessary) and applying the Monotonicity Lemma (see \cite[Theorem~1.8]{HaHV2017}), it is not hard to get this item.

(iii) {\em Necessity.}  We argue by contradiction: assume that the function $f_k$ is constant but there exists a point $x^* \in \Gamma_k \setminus \Sigma(f, S).$ Since the set $\Sigma(f, S)$ is closed and since the restriction $\| \cdot\| \colon \Gamma(f, S) \setminus \mathbb{B}_{R} \rightarrow (R, + \infty)$
is topological trivial fibration, we can find a sequence $\{x^{\ell}\}_{{\ell} \ge 1} \in \Gamma_k \setminus \Sigma(f, S)$ satisfying the following conditions:
\begin{enumerate}
\item[{\rm (a)}] $x^\ell$ tends to $x^*$ as $\ell$ tends to $+\infty;$ and
\item[{\rm (b)}] $\|x^\ell\| = \|x^*\| + \frac{1}{\ell}$ for all $\ell \ge 1.$
\end{enumerate}
By the Curve Selection Lemma (see \cite[Theorem~1.11]{HaHV2017}), there exists a smooth semi-algebraic curve
$(\phi, \lambda, \nu, \mu)  \colon [a, b] \rightarrow {\Bbb R}^l \times \mathbb{R}^m \times \mathbb{R},$
with $\phi(a) = x^*,$ such that for all $t \in [a, b],$ the following conditions hold:
\begin{eqnarray*}
&& \phi(t) \in \Gamma_k \setminus \Sigma(f, S), \quad \|\phi(t)\| \equiv t > R, \\
&& \nabla f(\phi(t))  -  \sum_{i = 1}^{l} \lambda_i(t) \nabla g_i(\phi(t)) - \sum_{j = 1}^{m} \nu_j(t) \nabla h_j(\phi(t)) - \mu(t) \phi(t) \equiv 0,\\
&& \nu_j(t) h_j(\phi(t)) \equiv 0, \ j = 1, \ldots, m.
\end{eqnarray*}
Note that the function $f \circ \phi$ is just $f_k,$ and so, it is constant (by the assumption). Then a simple calculation shows that
\begin{eqnarray*}
0 & = & \frac{d (f \circ \phi)(t)}{dt}  \ = \ \langle \nabla f(\phi(t)), \frac{d \phi(t) }{dt} \rangle \ = \ \mu(t)  \frac{1}{2}  \frac{d \|\phi(t)\|^2}{dt} \ = \ \mu(t) t.
\end{eqnarray*}
Hence, $\mu \equiv 0,$ and so the curve $\phi$ lies in $\Sigma(f, S),$ which is a contradiction. Therefore, $\Gamma_k \subset \Sigma(f, S).$

{\em Sufficiency.} 
As in the proof of \cite[Theorem~2.3]{HaHV2017}, we can see that the restriction of $f$ on each connected component of the set $\Sigma(f, S)$ is constant.
Hence, the function $f_k = f|_{\Gamma_k}$ is constant because $\Gamma_k$ is a connected set and $\Gamma_k \subset \Sigma(f, S).$ 
\end{proof}

By Lemma~\ref{Lemma24}, we have associated to the function $f$ a finite number of functions $f_k$ of a single variable.
As a consequence, we get the next corollary (see also \cite[Lemma~2.2]{HaHV2008-2}, \cite[Proposition~3.2]{HaHV2009-1}, and  \cite[Theorem~1.5]{TaLL1998}). Let
\begin{eqnarray*}
T_\infty(f, S) \ :=\ \{ \lambda \in {\Bbb R}
& | & \textrm{there exists a sequence $x^k \in \Gamma(f, S)$ such that}\\
&& \|x^k\| \rightarrow +\infty \quad \mathrm{ and } \quad  \ f(x^k) \rightarrow \lambda \},
\end{eqnarray*}
and we call it the {\em set of tangency values at infinity} of $f$ on $S.$

\begin{corollary}\label{Corollary21}
Assume that {\rm (LICQ)} holds on $S.$ We have
\begin{eqnarray*}
T_\infty(f, S) &  = & \big\{\lim_{t\to +\infty} f_k(t) \ | \ k = 1, \ldots, p \big\} \cap \mathbb{R}.
\end{eqnarray*}
In particular, the set $T_\infty(f, S)$ is finite.
\end{corollary}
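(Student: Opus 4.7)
The plan is to exploit the decomposition of $\Gamma(f, S) \setminus \mathbb{B}_R$ into the $p$ unbounded connected components $\Gamma_1, \ldots, \Gamma_p$ furnished by Lemma~\ref{Lemma23} (and the discussion preceding it) and to use the one-variable functions $f_k$ to reduce the behaviour of $f$ along sequences in $\Gamma(f, S)$ tending to infinity to a finite pigeonhole analysis. Throughout I fix an $R > 0$ large enough that the Hardt-trivial decomposition $\Gamma(f, S) \setminus \mathbb{B}_R = \Gamma_1 \sqcup \cdots \sqcup \Gamma_p$ and the conclusions of Lemma~\ref{Lemma23} both hold.

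To prove the inclusion $T_\infty(f, S) \subseteq \{\lim_{t \to +\infty} f_k(t) \mid k = 1, \ldots, p\} \cap \mathbb{R}$, I would start with $\lambda \in T_\infty(f, S)$ together with a sequence $\{x^\ell\} \subset \Gamma(f, S)$ satisfying $\|x^\ell\| \to +\infty$ and $f(x^\ell) \to \lambda$. For $\ell$ large, $x^\ell$ lies outside $\mathbb{B}_R$, hence in some $\Gamma_{k(\ell)}$. Since there are only finitely many components, after passing to a subsequence we may assume $k(\ell) = k$ is fixed. By definition of $f_k$, $f(x^\ell) = f_k(\|x^\ell\|)$, so $f_k(\|x^\ell\|) \to \lambda$ as $\|x^\ell\| \to +\infty$. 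By Lemma~\ref{Lemma23}(ii), $f_k$ is either constant or strictly monotone on $(R, +\infty)$, so $\lim_{t \to +\infty} f_k(t)$ exists in $\mathbb{R} \cup \{\pm\infty\}$, and this limit must equal $\lambda$; since $\lambda \in \mathbb{R}$ the limit lies in $\mathbb{R}$.

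For the converse inclusion, suppose $\lim_{t \to +\infty} f_k(t) = \lambda \in \mathbb{R}$ for some $k$. Since $\Gamma_k$ is an unbounded semi-algebraic set, I can choose a sequence $\{x^\ell\} \subset \Gamma_k \subset \Gamma(f, S)$ with $\|x^\ell\| \to +\infty$. Then $f(x^\ell) = f_k(\|x^\ell\|) \to \lambda$, which places $\lambda$ in $T_\infty(f, S)$. Finally, the ``in particular'' statement is immediate: the set on the right-hand side has at most $p$ elements, because each of the $p$ semi-algebraic monotone-or-constant functions $f_k$ has a unique limit at infinity in $\mathbb{R} \cup \{\pm\infty\}$, and we intersect with $\mathbb{R}$.

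I do not anticipate a serious obstacle; the only delicate point is the pigeonhole step, which requires that $p$ be finite and fixed (guaranteed by the Hardt triviality argument preceding Lemma~\ref{Lemma23}), together with the existence of $\lim_{t \to +\infty} f_k(t)$ in $\overline{\mathbb{R}}$, which is precisely what the constant-or-strictly-monotone dichotomy of Lemma~\ref{Lemma23}(ii) delivers. Everything else is bookkeeping using the identity $f|_{\Gamma_k \cap \mathbb{S}_t} = f_k(t)$.
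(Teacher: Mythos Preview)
Your argument is correct and follows the same overall strategy as the paper: reduce to the finitely many one-variable functions $f_k$ via the Hardt decomposition $\Gamma(f,S)\setminus\mathbb{B}_R=\Gamma_1\sqcup\cdots\sqcup\Gamma_p$, then use Lemma~\ref{Lemma23} to conclude that each $\lim_{t\to+\infty}f_k(t)$ exists in $\overline{\mathbb{R}}$. The only difference is a technical one in the forward inclusion: the paper invokes the Curve Selection Lemma at infinity to replace the sequence $\{x^\ell\}$ by a smooth semi-algebraic curve $\phi$ lying eventually in a single $\Gamma_k$, whereas you simply pigeonhole the sequence into one of the $p$ components. Your route is slightly more elementary---it avoids the Curve Selection Lemma entirely---while the paper's curve-based argument is more in the spirit of the semi-algebraic toolkit used throughout; both land on the same identity $f(x^\ell)=f_k(\|x^\ell\|)$ and finish identically.
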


\begin{proof}
Indeed, in light of Lemma~\ref{Lemma23}, there exist (finite or infinite) limits
$$\lim_{t\to +\infty} f_k(t)$$
for all $k = 1, \ldots, p.$ In particular, the set
$$\big\{\lim_{t\to +\infty} f_k(t) \ | \ k = 1, \ldots, p \big\}$$
is finite.

On the other hand, by the Curve Selection Lemma at infinity (see \cite[Theorem~1.12]{HaHV2017}), we can see that a real number $\lambda$ belongs to $T_\infty(f, S)$ if, and only if, there exists a smooth semi-algebraic curve $\phi \colon (R', +\infty) \rightarrow \mathbb{R}^n$ lying in $\Gamma(f, S)$ with $R' \ge R$ such that
$$\lim_{\tau \to +\infty} \| \phi(\tau)\| = +\infty \quad \textrm{ and } \quad \lim_{\tau \to +\infty} f(\phi(\tau)) = \lambda.$$
Increasing $R'$ if necessary, we may assume that the curve $\phi$ lies in $\Gamma_k$ for some $k \in \{1, \ldots, p\}.$ Consequently, we get $f(\phi(\tau))  = f_k(\|\phi(\tau)\|)$ for all $\tau > R'.$ Then the desired conclusion follows.
\end{proof}

\begin{remark}{\rm
It worth emphasizing that the finiteness of the set of tangency values at infinity plays an important role in solving numerically polynomial optimization problems, see \cite{HaHV2008-2, HaHV2009-1}. For more details on the subject, we refer the reader to the survey \cite{Laurent2009} and the monographs \cite{HaHV2017, Lasserre2009, Lasserre2015, Marshall2008} with the references therein.
}\end{remark}

\begin{corollary}\label{Corollary22}
Assume that {\rm (LICQ)} holds on $S.$ If $f$ is bounded from below on $S$ then
\begin{eqnarray*}
\inf_{x \in S} f(x) & = & \min\{ \lambda \ | \ \lambda \in f(\Sigma(f, S)) \cup T_\infty(f, S) \}.
\end{eqnarray*}
\end{corollary}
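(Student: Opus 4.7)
The plan is to run a dichotomy on a minimizing sequence drawn from the tangency variety: either the sequence has a bounded subsequence, in which case a cluster point is a genuine minimizer and the KKT conditions force it into $\Sigma(f,S)$; or the sequence escapes to infinity, in which case the limit value lies in $T_\infty(f,S)$ by definition. Finiteness of the two candidate sets then guarantees that the infimum is attained as the minimum of their union.

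Set $f^* := \inf_{x \in S} f(x)$, which is finite by hypothesis. I would first invoke the identity $f^* = \inf_{x \in \Gamma(f,S)} f(x)$ recorded in part~(ii) of the remark following Definition~\ref{Definition23}; this identity is the output of the spherical-slice construction carried out just before that definition, combined with Theorem~\ref{FritzJohnOptimalityConditions} applied to each sliced problem. The identity produces a sequence $\{x^k\} \subset \Gamma(f,S)$ with $f(x^k) \to f^*$.

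Next I would split on whether $\{x^k\}$ admits a bounded subsequence. In the bounded case I extract a convergent subsequence $x^k \to x^*$; closedness of $S$ forces $x^* \in S$, continuity of $f$ gives $f(x^*) = f^*$, so $x^*$ is an optimal solution of Problem~\eqref{Problem}. The assumption (LICQ) and Theorem~\ref{KKTOptimalityConditions} then place $x^* \in \Sigma(f,S)$, whence $f^* \in f(\Sigma(f,S))$. In the unbounded case $\|x^k\| \to +\infty$ along a further subsequence, and Definition of $T_\infty(f,S)$ immediately yields $f^* \in T_\infty(f,S)$.

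To conclude I would verify the reverse inequality: every $\lambda \in f(\Sigma(f,S)) \cup T_\infty(f,S)$ satisfies $\lambda \ge f^*$. For $\lambda = f(x)$ with $x \in \Sigma(f,S) \subset S$ this is by definition of the infimum; for $\lambda \in T_\infty(f,S)$ it follows by passing to the limit in $f(y^k) \ge f^*$ along a witnessing sequence $\{y^k\} \subset \Gamma(f,S) \subset S$. Combined with the finiteness statements of Lemma~\ref{Lemma21} and Corollary~\ref{Corollary21}, this shows that the minimum of the union is attained and equals $f^*$. I do not foresee a substantive obstacle; the only subtle point is that the identity $\inf_S f = \inf_\Gamma f$ is used as a black box from the preliminary discussion, after which Fritz--John/KKT and the boundedness dichotomy do all the real work.
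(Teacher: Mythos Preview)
Your argument is correct and follows essentially the same route as the paper: a dichotomy (attainment versus escape to infinity) that lands $f^*$ in $f(\Sigma(f,S))$ via Theorem~\ref{KKTOptimalityConditions} or in $T_\infty(f,S)$ via the spherical-slice construction before Definition~\ref{Definition23}, after which the reverse inequality is immediate. The only cosmetic difference is that you phrase the dichotomy in terms of boundedness of a minimizing sequence drawn from $\Gamma(f,S)$ rather than directly on whether the infimum is attained, and you spell out the reverse inequality and the finiteness appeal more explicitly than the paper does.
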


\begin{proof}
If $f$ attains its infimum on $S$ then $f_* := \inf_{x \in S} f(x)  \in f(\Sigma(f, S))$ because of Theorem~\ref{KKTOptimalityConditions}.
Otherwise, the argument given before Definition~\ref{Definition23} shows that $f_* \in T_\infty(f, S).$ In both cases, we have
\begin{eqnarray*}
f_* & \ge & \min\{ \lambda \ | \ \lambda \in f(\Sigma(f, S)) \cup T_\infty(f, S) \},
\end{eqnarray*}
from which follows the desired conclusion.
\end{proof}

For each $t > R,$ we have $S \cap \mathbb{S}_t$ is a nonempty compact semi-algebraic set. Hence, the function
$$\psi \colon (R, +\infty) \rightarrow \mathbb{R}, \quad t \mapsto \psi(t) := \min_{x \in S \cap \mathbb{S}_t} f(x),$$
is well-defined, and moreover, it is semi-algebraic because of the Tarski--Seidenberg Theorem (see, for example, \cite[Theorem~1.5]{HaHV2017}).

The following lemma is simple but useful.
\begin{lemma} \label{Lemma24}
For $R$ large enough, the following statements hold:
\begin{enumerate}
\item [{\rm (i)}] The functions $\psi$ and $f_1, \ldots, f_p$ are either coincide or disjoint.

\item [{\rm (ii)}] $\psi(t) = \min_{k = 1, \ldots, p} f_k(t)$ for all $t > R.$

\item [{\rm (iii)}] $\psi \equiv f_k$ for some $k \in \{1, \ldots, p\}.$
\end{enumerate}
\end{lemma}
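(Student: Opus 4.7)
The plan is to treat (i) as a purely semi-algebraic dichotomy, (ii) as a direct consequence of Fritz-John applied on spheres, and then to combine them to deduce (iii).

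For (i), each of $\psi, f_1, \ldots, f_p$ is a semi-algebraic function in one real variable on $(R, +\infty)$. For any two of them, say $u$ and $v$, the difference $u - v$ is semi-algebraic, so the Monotonicity Lemma (\cite[Theorem~1.8]{HaHV2017}) supplies some $R_{u,v} \ge R$ such that on $(R_{u,v}, +\infty)$ the function $u - v$ is either identically zero or nowhere zero. Replacing $R$ by the maximum of the $R_{u,v}$ over the finitely many unordered pairs yields the required dichotomy.

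For (ii), fix $t > R$. Since $S \cap \mathbb{S}_t$ is a nonempty compact semi-algebraic set, $\psi(t) = f(x^*)$ for some $x^* \in S \cap \mathbb{S}_t$. Viewing $x^*$ as a minimizer of $f$ on $S$ subject to the additional equality constraint $\|x\|^2 = t^2$ and invoking the Fritz-John argument carried out just before Definition~\ref{Definition23} produces multipliers that place $x^*$ in $\Gamma(f, S) \cap \mathbb{S}_t$. Since $t > R$, this intersection is contained in $\bigcup_{k=1}^p \Gamma_k$, so $x^* \in \Gamma_k \cap \mathbb{S}_t$ for some $k$, and hence $\psi(t) = f(x^*) = f_k(t)$. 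Conversely, $\Gamma_k \cap \mathbb{S}_t \subset S \cap \mathbb{S}_t$ for every $k$, which gives $f_k(t) \ge \psi(t)$; combining the two estimates yields $\psi(t) = \min_{k=1,\ldots,p} f_k(t)$.

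For (iii), by (ii) there is, for every $t > R$, some index $k(t) \in \{1, \ldots, p\}$ with $\psi(t) = f_{k(t)}(t)$. Since the range of $k(t)$ is finite, at least one index $k_0$ is attained on an unbounded subset of $(R, +\infty)$; the dichotomy of (i), applied to the pair $(\psi, f_{k_0})$, then forces $\psi \equiv f_{k_0}$ on $(R, +\infty)$. The only delicate point is the Fritz-John step in (ii): one has to verify that the extra ``sphere'' multiplier associated with $\|x\|^2 = t^2$ combines with the original multipliers to yield exactly the tangency relations of Definition~\ref{Definition23}, which is precisely the computation spelled out in the paragraph preceding that definition.
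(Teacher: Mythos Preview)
Your proof is correct and follows essentially the same approach as the paper: (i) via the Monotonicity Lemma, (ii) via Fritz--John on the sphere to pass from $S\cap\mathbb{S}_t$ to $\Gamma(f,S)\cap\mathbb{S}_t=\bigcup_k\Gamma_k\cap\mathbb{S}_t$, and (iii) by combining (i) and (ii). One minor remark: in (iii) you do not need an unbounded set of coincidences---a single $t_0$ with $\psi(t_0)=f_{k_0}(t_0)$ already forces $\psi\equiv f_{k_0}$ by the dichotomy in (i).
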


\begin{proof}
(i) This is an immediate consequence of the Monotonicity Lemma (see, for example, \cite[Theorem~1.8]{HaHV2017}).

(ii) By construction, for all $t > R$ we have
\begin{eqnarray*}
\Gamma(f, S) \cap \mathbb{S}_t &=& \bigcup_{k = 1}^p \Gamma_k \cap \mathbb{S}_t.
\end{eqnarray*}
Therefore,
\begin{eqnarray*}
\psi(t)  &=& \min_{x \in S \cap \mathbb{S}_t} f(x) \ = \ \min_{x \in \Gamma(f, S) \cap \mathbb{S}_t} f(x) \ = \ \min_{k = 1, \ldots, p} \min_{x \in \Gamma_k \cap \mathbb{S}_t} f(x) \ = \ \min_{k = 1, \ldots, p}  f_k(t),
\end{eqnarray*}
where the second equality follows from Theorem~\ref{FritzJohnOptimalityConditions}.

(iii) This follows from Items (i) and (ii).
\end{proof}

In view of Lemma~\ref{Lemma23}, the functions $f_k, k = 1, \ldots, p,$ are either constant or strictly monotone. Consequently, the following limits exist:
$$\lambda_k := \lim_{t \to +\infty} f_k(t) \in \mathbb{R} \cup \{\pm \infty\} \quad \textrm{ for } \quad k = 1, \ldots, p.$$
Note that if $f_k \equiv \lambda_k,$ then $\lambda_k \in f(\Sigma(f, S)).$ Furthermore, by Lemma~\ref{Lemma24}, the limit $\lim_{t \to +\infty} \psi(t)$ exists and equals to $\lambda_k$ for some $k.$ 

\begin{lemma} \label{Lemma25}
We have
\begin{eqnarray*}
\lim_{t \to +\infty} \psi(t)  &=&  \min_{k = 1, \ldots, p} \lambda_k.
\end{eqnarray*}
\end{lemma}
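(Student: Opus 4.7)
The plan is to leverage Lemma~\ref{Lemma24} and Lemma~\ref{Lemma23} directly; once those are in hand the statement essentially follows from taking limits in an obvious pointwise inequality.

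First, I would invoke Lemma~\ref{Lemma24}(iii) to fix some index $k_0 \in \{1, \ldots, p\}$ such that $\psi \equiv f_{k_0}$ on $(R, +\infty).$ By Lemma~\ref{Lemma23}(ii) the function $f_{k_0}$ is either constant or strictly monotone on $(R, +\infty),$ so its limit at $+\infty$ exists in $\mathbb{R} \cup \{\pm \infty\}$ and equals $\lambda_{k_0}.$ Hence
\begin{eqnarray*}
\lim_{t \to +\infty} \psi(t) & = & \lambda_{k_0}.
\end{eqnarray*}

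Next, I would use Lemma~\ref{Lemma24}(ii): for every $t > R$ and every $k \in \{1, \ldots, p\},$
\begin{eqnarray*}
\psi(t) & = & \min_{j = 1, \ldots, p} f_j(t) \ \le \ f_k(t).
\end{eqnarray*}
Since both $\psi = f_{k_0}$ and $f_k$ admit limits at $+\infty$ (by Lemma~\ref{Lemma23}(ii) applied to each $f_j,$ together with the first step), passing to the limit in this inequality yields $\lambda_{k_0} \le \lambda_k$ for every $k.$ Therefore $\lambda_{k_0} = \min_{k = 1, \ldots, p} \lambda_k,$ which combined with the first display gives the claim.

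I do not expect a real obstacle here: the only subtlety is the possibility that some $\lambda_k$ are $\pm \infty,$ but the monotonicity provided by Lemma~\ref{Lemma23}(ii) guarantees the existence of all the limits in $\mathbb{R} \cup \{\pm \infty\},$ and the inequality $\psi(t) \le f_k(t)$ remains valid when passing to the limit in this extended setting. Everything else is a direct bookkeeping of what has already been proved in Lemmas~\ref{Lemma23} and \ref{Lemma24}.
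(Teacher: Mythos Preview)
Your proof is correct and follows essentially the same approach as the paper's own argument: use Lemma~\ref{Lemma24}(ii) to obtain $\psi(t)\le f_k(t)$ and pass to the limit, then use Lemma~\ref{Lemma24}(iii) to identify $\psi$ with some $f_{k_0}$ so that $\lim_{t\to+\infty}\psi(t)=\lambda_{k_0}$. The only cosmetic difference is the order in which you invoke the two parts of Lemma~\ref{Lemma24}.
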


\begin{proof}
Indeed, by Lemma~\ref{Lemma24}, $\psi(t) \le f_k(t)$ for all $t > R$ and all $k = 1, \ldots, p.$ Letting $t \to +\infty,$ we get
\begin{eqnarray}\label{Eqn4}
\lim_{t \to +\infty} \psi(t)  &\le& \min_{k = 1, \ldots, p} \lambda_k.
\end{eqnarray}
On the other hand, by Lemma~\ref{Lemma24} again, there exists an index $k \in \{1, \ldots, p\}$ such that $\psi \equiv f_k,$ and so
\begin{eqnarray*}
\lim_{t \to +\infty} \psi(t) = \lambda_k.
\end{eqnarray*}
Combining this with the inequality \eqref{Eqn4}, we get the desired conclusion.
\end{proof}

We finish this section with the following observation.
\begin{lemma} \label{Lemma26}
We have
\begin{eqnarray*}
\lim_{t \to +\infty} \psi(t)  &\ge& \inf_{x \in S} f(x)
\end{eqnarray*}
with the equality if $f$ does not attain its infimum on $S.$
\end{lemma}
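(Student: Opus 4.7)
The plan is to separate the lemma into two parts: the easy unconditional inequality, and the reverse inequality under the non-attainment hypothesis. Both follow from elementary facts about the semi-algebraic function $\psi$ together with a compactness argument on minimizing sequences.

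For the inequality $\lim_{t \to +\infty} \psi(t) \ge \inf_{x \in S} f(x)$, the key observation is that for every $t > R$ the set $S \cap \mathbb{S}_t$ is contained in $S$, so by definition of $\psi$ we have $\psi(t) \ge \inf_{x \in S} f(x)$. The function $\psi$ is semi-algebraic in a single variable, hence by the Monotonicity Lemma it is eventually monotone and the limit $\lim_{t \to +\infty} \psi(t) \in \mathbb{R} \cup \{\pm \infty\}$ exists. Passing to the limit in the pointwise inequality yields the claim.

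For the reverse inequality under non-attainment, set $f_* := \inf_{x \in S} f(x)$ and pick a minimizing sequence $\{x^k\} \subset S$ with $f(x^k) \to f_*$ (this is understood as $f(x^k) \to -\infty$ in case $f_* = -\infty$). I would first argue that $\|x^k\| \to +\infty$, at least along a subsequence. If not, a bounded subsequence would, by the Bolzano--Weierstrass theorem, converge to some $x^* \in \mathbb{R}^n$; closedness of $S$ gives $x^* \in S$, and continuity of $f$ forces $f(x^*) = f_* \in \mathbb{R}$, contradicting the assumption that $f$ does not attain its infimum on $S$ (the case $f_* = -\infty$ being excluded outright). Once $\|x^k\| \to +\infty$, for $k$ large enough we have $\|x^k\| > R$, so $\psi(\|x^k\|) \le f(x^k)$. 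Since $\lim_{t \to +\infty} \psi(t)$ exists, evaluating along the subsequence $t_k := \|x^k\|$ and passing to the limit gives $\lim_{t \to +\infty} \psi(t) \le f_*$, which combined with the first part produces the desired equality.

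There is really no serious obstacle here; the only subtle point is verifying that the minimizing sequence escapes to infinity, and the non-attainment hypothesis together with closedness of $S$ and continuity of $f$ handles that cleanly. The whole argument is short and self-contained, using nothing more than the definition of $\psi$, the Monotonicity Lemma for semi-algebraic functions of one variable, and a standard compactness/continuity contradiction.
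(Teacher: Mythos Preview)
Your proof is correct and follows essentially the same route as the paper: the pointwise inequality $\psi(t)\ge\inf_{x\in S}f(x)$ gives the first part, and for the second part one uses a minimizing sequence $\{x^k\}\subset S$ with $\|x^k\|\to+\infty$ to bound $\psi(\|x^k\|)\le f(x^k)\to f_*$. The only differences are that you spell out the Bolzano--Weierstrass argument for why the minimizing sequence must escape to infinity (the paper simply asserts the existence of such a sequence) and you invoke the Monotonicity Lemma directly for the existence of $\lim_{t\to+\infty}\psi(t)$, whereas in the paper this has already been secured earlier via Lemma~\ref{Lemma24}(iii).
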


\begin{proof}
Indeed, we have for all $t > R,$
\begin{eqnarray*}
\psi(t)  &=& \min_{x \in S \cap \mathbb{S}_t} f(x) \ \ge \ \inf_{x \in S} f(x).
\end{eqnarray*}
Letting $t \to +\infty,$ we get $\lim_{t \to +\infty} \psi(t) \ge \inf_{x \in S} f(x).$

Now suppose that $f$ does not attain its infimum on $S,$ then there exists a sequence $\{x^\ell\}_{\ell \ge 1} \subset S$ such that
$$\lim_{\ell \to +\infty} \|x^\ell\| = +\infty \quad \textrm{ and } \quad \lim_{\ell \to +\infty} f(x^\ell) = \inf_{x \in S} f(x).$$
On the other hand, by definition, it is clear that $\psi(\|x^\ell\|) \le f(x^\ell)$ for all $\ell$ large enough. Therefore,
$\lim_{t \to +\infty} \psi(t) \le \inf_{x \in S} f(x),$ and so the desired conclusion follows.
\end{proof}

Note that in the above lemma we do not assume that $f$ is bounded from below on $S.$

\section{Main results} \label{Section3}

In this section, we give some answers to the questions stated in the introduction section.
Recall that $f, g_i, h_j  \colon \mathbb{R}^n \rightarrow \mathbb{R},$ $i = 1, \ldots, l, j = 1, \ldots, m,$ are polynomial functions and that the set
$$S :=\{ x \in \mathbb{R}^n \ | \ g_1(x) = 0, \ldots, g_l(x) = 0, \ h_1(x) \ge 0, \ldots, h_m(x) \ge 0\}$$
is nonempty and unbounded. From now on we will assume that {\rm (LICQ)} holds on $S.$ 

Keeping the notations as in the previous section, we know that $\Gamma(f, S) \setminus \mathbb{B}_R$ has exactly $p$ connected components $\Gamma_1, \ldots, \Gamma_p,$ and each such component is a unbounded semi-algebraic set. Corresponding to each $\Gamma_k,$ the functions
$$f_k \colon (R, +\infty) \rightarrow \mathbb{R}, \quad t \mapsto f_k(t),$$
are defined. Also, recall that the function $\psi \colon (R, +\infty) \rightarrow \mathbb{R}$ is defined by
$$\psi(t) := \min_{x \in S \cap \mathbb{S}_t} f(x).$$
Here and in the following, $R$ is chosen large enough so that the conclusions of Lemmas~\ref{Lemma22}, \ref{Lemma23}, and \ref{Lemma24} hold.

\subsection{Boundedness}

In this subsection we present necessary and sufficient conditions for the boundedness from below and from above of the objective function $f$ on the feasible set $S.$

\begin{theorem} \label{Theorem31}
The following statements hold:
\begin{enumerate}
\item [{\rm (i)}]  $f$ is bounded from below on $S$ if, and only if, it holds that $\min_{k = 1, \ldots, p} \lambda_k  >  -\infty.$

\item [{\rm (ii)}] $f$ is bounded from above on $S$ if, and only if, it holds that $\max_{k = 1, \ldots, p} \lambda_k  <  +\infty.$

\item [{\rm (iii)}] $f$ is bounded neither from below nor from above if, and only if, it holds that
$$\min_{k = 1, \ldots, p} \lambda_k  =  -\infty \quad \textrm{ and } \quad \max_{k = 1, \ldots, p} \lambda_k  =  +\infty.$$
\end{enumerate}
\end{theorem}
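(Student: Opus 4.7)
The plan is to reduce all three items to Lemma~\ref{Lemma25}, which asserts that $\lim_{t \to +\infty} \psi(t) = \min_{k = 1, \ldots, p} \lambda_k$, combined with the elementary pointwise inequality $\psi(\|x\|) \le f(x)$ for every $x \in S$ with $\|x\| > R$ (immediate from the definition of $\psi$ as a minimum over $S \cap \mathbb{S}_{\|x\|}$).

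For item (i), I would argue by contrapositive in both directions. Suppose first that $\min_{k} \lambda_k = -\infty$. By Lemma~\ref{Lemma25}, $\lim_{t \to +\infty} \psi(t) = -\infty$, so there exists a sequence $t_\ell \to +\infty$ with $\psi(t_\ell) \to -\infty$; since $S \cap \mathbb{S}_{t_\ell}$ is nonempty and compact, we may pick $x^\ell \in S \cap \mathbb{S}_{t_\ell}$ realizing $f(x^\ell) = \psi(t_\ell)$, witnessing that $f$ is not bounded from below on $S$. Conversely, assume $f$ is not bounded from below on $S$ and pick $\{x^\ell\} \subset S$ with $f(x^\ell) \to -\infty$. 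Because $f$ is continuous and $S \cap \mathbb{B}_R$ is compact, the sequence cannot remain in any bounded region, so after extracting a subsequence we may assume $\|x^\ell\| \to +\infty$; then $\psi(\|x^\ell\|) \le f(x^\ell) \to -\infty$, whence $\lim_{t \to +\infty} \psi(t) = -\infty$, and Lemma~\ref{Lemma25} forces $\min_{k} \lambda_k = -\infty$.

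For item (ii), I would apply item (i) to the polynomial $-f$ rather than reprove everything. The key observation, which is the only genuinely new point in this theorem, is that $\Gamma(-f, S) = \Gamma(f, S)$: the defining system in Definition~\ref{Definition23} is invariant under the substitution $(f, \kappa) \mapsto (-f, -\kappa)$, and no sign condition is imposed on $\kappa$ or on the multipliers $\lambda_i, \nu_j, \mu$. Hence the connected component decomposition $\Gamma_1, \ldots, \Gamma_p$ and the hypotheses of Lemmas~\ref{Lemma22}--\ref{Lemma24} transfer verbatim, and the tangency functions for $-f$ are $-f_k$ with limits $-\lambda_k$. Thus $f$ is bounded from above on $S$ iff $-f$ is bounded from below on $S$ iff $\min_{k}(-\lambda_k) > -\infty$ iff $\max_{k} \lambda_k < +\infty$.

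Item (iii) is then an immediate combination of (i) and (ii). The only real obstacle I anticipate is the reflection argument in (ii): one must verify that Definition~\ref{Definition23} is genuinely symmetric under sign flip of $f$, i.e.\ that no implicit sign restriction on the multipliers breaks the identity $\Gamma(f, S) = \Gamma(-f, S)$. Once this symmetry is in hand, the proof is essentially a single application of Lemma~\ref{Lemma25} together with the compactness of $S \cap \mathbb{B}_R$.
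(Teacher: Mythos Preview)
Your proof is correct and follows essentially the same approach as the paper: both hinge on Lemma~\ref{Lemma25} to identify $\lim_{t\to+\infty}\psi(t)$ with $\min_k \lambda_k$, together with the elementary comparison $\psi(\|x\|)\le f(x)$ and compactness of $S\cap\mathbb{B}_R$. The only cosmetic differences are that you inline the content of Lemma~\ref{Lemma26} rather than cite it, and you handle item~(ii) by the clean symmetry $\Gamma(-f,S)=\Gamma(f,S)$ instead of repeating the argument---both perfectly legitimate shortcuts.
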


\begin{proof}
We prove only Item~(i); the other items may be treated similarly.

In light of Lemma~\ref{Lemma21}, $f(\Sigma(f, S))$ is a finite subset of $\mathbb{R}.$ By Lemma~\ref{Lemma23}, for any $k \not \in K,$ we have $\lambda_k$ belongs to the set $f(\Sigma(f, S))$ and so it is finite. Combining this with Lemmas~\ref{Lemma25} and \ref{Lemma26}, we get the desired conclusion.
\end{proof}

In what follows we let
$$K :=\{k \ | \ f_k \textrm{ is not constant} \}.$$

\begin{remark}{\rm
By definition, the index set $K$ is empty if, and only if, the restriction of $f$ on $S$ is constant outside a compact set in $\mathbb{R}^n.$ Furthermore, in light of Lemma~\ref{Lemma23}, $K = \{1, \ldots, p\}$ if, and only if, the set $\Sigma(f, S)$ of critical points of $f$ on $S$ is (possibly empty) compact.
}\end{remark}

By the Growth Dichotomy Lemma~\cite[Lemma~1.7]{HaHV2017} and increasing $R$ if necessary, we can assume that each function $f_k, k \in K,$ is developed into a fractional power series of the form
\begin{eqnarray*}
f_k(t) &=& a_k t^{\alpha_k} + \textrm{ lower order terms in } t,
\end{eqnarray*}
where $a_k \in \mathbb{R} \setminus \{0\}$ and $\alpha_k \in \mathbb{Q}.$

\begin{theorem} \label{Theorem32}
With the above notation, the following statements hold:
\begin{enumerate}
\item [{\rm (i)}]  $f$ is bounded from below on $S$ if, and only if, for any $k \in K,$
$$\alpha_k > 0 \quad \Longrightarrow \quad a_k > 0.$$

\item [{\rm (ii)}] $f$ is bounded from above on $S$ if, and only if, for any $k \in K,$
$$\alpha_k > 0 \quad \Longrightarrow  \quad a_k < 0.$$

\item [{\rm (iii)}] $f$ is bounded neither from below nor from above if, and only if, there exist integer numbers $k, k' \in K$ such that
$$\alpha_k > 0, \quad a_k > 0, \quad \alpha_{k'} > 0, \quad a_{k'} < 0.$$
\end{enumerate}
\end{theorem}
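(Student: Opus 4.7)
The plan is to reduce Theorem~\ref{Theorem32} to Theorem~\ref{Theorem31} by computing each limit $\lambda_k = \lim_{t \to +\infty} f_k(t)$ directly from the asymptotic expansion. Since Theorem~\ref{Theorem31} already characterizes the three boundedness conditions in terms of $\min_k \lambda_k$ and $\max_k \lambda_k$, all that remains is to determine when $\lambda_k \in \mathbb{R}$, when $\lambda_k = +\infty$, and when $\lambda_k = -\infty$, for each $k \in \{1, \ldots, p\}$.

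First I would separate the two cases $k \notin K$ and $k \in K$. For $k \notin K$, by definition $f_k$ is constant, so $\lambda_k$ is a finite real number, and hence such indices place no constraint on the signs governing boundedness; by Lemma~\ref{Lemma23}(iii) and Lemma~\ref{Lemma21}, these finitely many values even lie in the finite set $f(\Sigma(f, S))$. For $k \in K$, I would use the expansion
\begin{eqnarray*}
f_k(t) &=& a_k t^{\alpha_k} + \textrm{lower order terms in } t,
\end{eqnarray*}
with $a_k \ne 0$, and simply read off the limit: if $\alpha_k > 0$, then the leading term dominates and $\lambda_k = +\infty$ when $a_k > 0$, while $\lambda_k = -\infty$ when $a_k < 0$; if $\alpha_k = 0$, then $\lambda_k = a_k \in \mathbb{R}$; and if $\alpha_k < 0$, then all terms tend to $0$ and $\lambda_k = 0 \in \mathbb{R}$. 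Consequently, $\lambda_k = -\infty$ occurs precisely when $\alpha_k > 0$ and $a_k < 0$, while $\lambda_k = +\infty$ occurs precisely when $\alpha_k > 0$ and $a_k > 0$.

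Combining these observations with Theorem~\ref{Theorem31}(i) gives item~(i): $f$ is bounded from below on $S$ iff $\lambda_k \ne -\infty$ for every $k$, iff for every $k \in K$ with $\alpha_k > 0$ one has $a_k > 0$. The analogous reading against Theorem~\ref{Theorem31}(ii) yields item~(ii), and item~(iii) is obtained by combining the two by Theorem~\ref{Theorem31}(iii); the coexistence of some $k \in K$ with $(\alpha_k > 0, a_k > 0)$ and some $k' \in K$ with $(\alpha_{k'} > 0, a_{k'} < 0)$ is forced because, for any $k \notin K$, $\lambda_k$ is finite and thus cannot contribute $\pm\infty$ to $\min_k \lambda_k$ or $\max_k \lambda_k$.

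The argument is essentially a bookkeeping step once the expansion is in hand; the only point requiring any care is the observation that the indices $k \notin K$ automatically contribute finite $\lambda_k$, so that the characterization can be stated purely in terms of $k \in K$. There is no real obstacle, since the Growth Dichotomy Lemma invoked just before the theorem guarantees that the expansion of each $f_k$ with $k \in K$ has a well-defined leading exponent $\alpha_k \in \mathbb{Q}$ and nonzero leading coefficient $a_k$, which is exactly the data needed to evaluate $\lambda_k$.
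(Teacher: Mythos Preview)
Your proposal is correct and follows essentially the same route as the paper: reduce to Theorem~\ref{Theorem31}, observe that indices $k\notin K$ give finite $\lambda_k$, and for $k\in K$ read off whether $\lambda_k$ is $+\infty$, $-\infty$, or finite from the sign of $\alpha_k$ and $a_k$. The paper's proof is a one-line version of exactly this argument; you have simply made the case analysis explicit.
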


\begin{proof}
We prove only Item~(i); the rest follows easily.

By Theorem~\ref{Theorem31}, $f$ is bounded from below on $S$ if, and only if, it holds that $\lambda_k = \lim_{t \to +\infty} f_k(t) >  -\infty$ for all $k \in K.$ Then Item~(i) follows immediately from the definition of $\alpha_k$ and $a_k.$
\end{proof}

\begin{remark}{\rm
Following \cite{Grandjean2007} and \cite{Kurdyka2000} we can say that the exponents $\alpha_k$ are {\em characteristic exponents of $f|_S$ at infinity at $\lambda_k.$}
}\end{remark}

\subsection{Existence of optimal solutions}

In this subsection we provide necessary and sufficient conditions for the existence of optimal solutions to the problem~\eqref{Problem}.
We start with the following result.

\begin{theorem} \label{Theorem33}
The function $f$ attains its infimum on $S$ if, and only if, it holds that
\begin{eqnarray*}
\Sigma(f, S) \ne \emptyset \quad \textrm{ and } \quad \min_{x \in \Sigma(f, S) } f(x) & \le & \min_{k \in K} \lambda_k.
\end{eqnarray*}
\end{theorem}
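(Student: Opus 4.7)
The plan is to prove the two implications separately, using the key identities $\lim_{t\to+\infty}\psi(t) = \min_{k=1,\ldots,p}\lambda_k$ from Lemma~\ref{Lemma25} and $\lim_{t\to+\infty}\psi(t) = \inf_{x\in S}f(x)$ when $f$ does not attain its infimum (Lemma~\ref{Lemma26}), together with the fact that $f_k$ constant $\Longleftrightarrow \Gamma_k \subset \Sigma(f,S)$ from Lemma~\ref{Lemma23}.

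For the forward implication I would argue directly. Assume $f$ attains its infimum at some $x^* \in S$. Since {\rm (LICQ)} holds, Theorem~\ref{KKTOptimalityConditions} gives $x^* \in \Sigma(f,S)$, hence $\Sigma(f,S)\neq\emptyset$ and $\min_{x\in\Sigma(f,S)}f(x) \le f(x^*) = \inf_{x\in S}f(x)$; combined with the trivial $\min_{x\in\Sigma(f,S)}f(x)\ge \inf_{x\in S}f(x)$ (since $\Sigma\subset S$) we get equality. Then Lemma~\ref{Lemma26} and Lemma~\ref{Lemma25} yield
\begin{eqnarray*}
\min_{k\in K}\lambda_k \;\ge\; \min_{k=1,\ldots,p}\lambda_k \;=\; \lim_{t\to+\infty}\psi(t) \;\ge\; \inf_{x\in S}f(x) \;=\; \min_{x\in\Sigma(f,S)}f(x),
\end{eqnarray*}
which is the required inequality.

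For the backward implication I would proceed by contradiction: assume $\Sigma(f,S)\ne\emptyset$ and $\min_{\Sigma}f \le \min_{k\in K}\lambda_k$, but that $f$ does not attain its infimum on $S$. Then $\inf_S f < \min_{\Sigma}f$ strictly, because equality would place a minimizer in $\Sigma \subset S$. By Lemma~\ref{Lemma26} and Lemma~\ref{Lemma25} this gives $\min_{k=1,\ldots,p}\lambda_k = \inf_S f < \min_\Sigma f \le \min_{k\in K}\lambda_k$, so the overall minimum must be attained at some index $k_0 \notin K$. But then $f_{k_0}$ is constant, so by Lemma~\ref{Lemma23} $\Gamma_{k_0}\subset\Sigma(f,S)$ and therefore $\lambda_{k_0}\in f(\Sigma(f,S))$, forcing $\lambda_{k_0}\ge \min_\Sigma f > \lambda_{k_0}$, a contradiction.

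The only real subtlety is bookkeeping at the extremes, which the setup handles automatically: Lemma~\ref{Lemma21} guarantees $f(\Sigma(f,S))$ is a finite subset of $\mathbb{R}$, so $\min_\Sigma f$ is a genuine real number as soon as $\Sigma(f,S)\ne\emptyset$; the convention $\min\emptyset = +\infty$ makes the hypothesis vacuous when $K=\emptyset$, and in that case the forward step $\min_k\lambda_k\ge \min_\Sigma f$ already follows because every $\lambda_k\in f(\Sigma(f,S))$; finally, if $\inf_S f=-\infty$ then some $\lambda_k=-\infty$, which can occur only for $k\in K$, keeping the $k_0\notin K$ branch of the contradiction argument finite and meaningful. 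The main obstacle is simply recognizing that Lemmas~\ref{Lemma25} and \ref{Lemma26} together force $\inf_S f = \min_k\lambda_k$ precisely when $f$ fails to attain its infimum, which is the pivot on which both directions turn.
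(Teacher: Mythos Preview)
Your proof is correct and follows essentially the same route as the paper's: both directions hinge on Lemmas~\ref{Lemma25} and \ref{Lemma26} to identify $\min_k\lambda_k$ with $\inf_S f$ in the non-attainment case, Theorem~\ref{KKTOptimalityConditions} to place minimizers in $\Sigma(f,S)$, and Lemma~\ref{Lemma23}(iii) to handle indices $k\notin K$. The only cosmetic difference is that the paper, in the sufficiency direction, first explicitly invokes Theorem~\ref{Theorem31} to establish boundedness from below before running the contradiction, whereas you fold that into your discussion of edge cases at the end.
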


\begin{proof}
Note that $f(\Sigma(f, S))$ is a finite subset of $\mathbb{R}$ (see Lemma~\ref{Lemma21}).

{\em Necessity.}  Let $f$ attain its infimum on $S,$ i.e., there exists a point $x^* \in S$ such that
\begin{eqnarray*}
f(x^*) &=& \inf_{x \in S} f(x).
\end{eqnarray*}
In light of Theorem~\ref{KKTOptimalityConditions}, $x^* \in \Sigma(f, S)$ and so $\Sigma(f, S)$ is nonempty.

On the other hand, for all $t > R$ we have
\begin{eqnarray*}
\inf_{x \in S} f(x) & \le & \min_{x \in S \cap \mathbb{S}_t} f(x) \ = \ \psi(t) \ = \ \min_{k = 1, \ldots, p} f_k(t),
\end{eqnarray*}
where the last equality follows from Lemma~\ref{Lemma24}. Therefore,
$$f(x^*) \le f_k(t) \quad \textrm{ for } \quad k = 1, \ldots, p.$$
Letting $t \to +\infty,$ we get
\begin{eqnarray*}
f(x^*) & \le & \min_{k = 1, \ldots, p} \lambda_k \ \le \ \min_{k \in K} \lambda_k.
\end{eqnarray*}

{\em Sufficiency.}  By the assumption, we have
\begin{eqnarray*}
- \infty & < & \min_{x \in \Sigma(f, S)} f(x) \ = \  \min_{\lambda \in f(\Sigma(f, S))} \lambda \ \le \ \min_{k \in K} \lambda_k.
\end{eqnarray*}
It follows from Theorem~\ref{Theorem31} that $f$ is bounded from below on $S.$

Now, assume that $f$ does not attain its infimum on $S.$ Then
\begin{eqnarray*}
\min_{x \in \Sigma(f, S) } f(x) & > & \inf_{x \in S} f(x).
\end{eqnarray*}
Moreover, by Lemmas~\ref{Lemma25} and \ref{Lemma26}, we have
\begin{eqnarray*}
\inf_{x \in S} f(x) = \lim_{t \to +\infty} \psi(t) \ = \ \min_{k \in 1, \ldots, p} \lambda_k.
\end{eqnarray*}
Consequently,
\begin{eqnarray*}
\min_{x \in \Sigma(f, S)} f(x) & > & \min_{k \in 1, \ldots, p} \lambda_k.
\end{eqnarray*}
Thanks to Lemma~\ref{Lemma23}(iii), we know that $\lambda_k \in f(\Sigma(f, S))$ for all $k \not \in K.$ Therefore
\begin{eqnarray*}
\min_{x \in \Sigma(f, S)} f(x) & > &  \min_{k \in K} \lambda_k,
\end{eqnarray*}
which contradicts the assumption that $\min_{x \in \Sigma(f, S) } f(x) \le \min_{k \in K} \lambda_k.$
\end{proof}

\begin{corollary}\label{Corollary31}
The set of all optimal solutions of the problem $\inf_{s \in S} f(x)$ is nonempty compact if, and only if, it holds that
\begin{eqnarray*}
\Sigma(f, S) \ne \emptyset, \quad \min_{x \in \Sigma(f, S) } f(x) & \le & \min_{k \in K} \lambda_k,
\quad \textrm{ and } \quad \min_{x \in \Sigma(f, S) } f(x) \  < \  \min_{k \not \in K} \lambda_k.
\end{eqnarray*}
\end{corollary}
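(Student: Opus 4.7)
My plan is to leverage Theorem~\ref{Theorem33} together with a boundedness analysis of the argmin. Write
\[
A \ := \ \{x \in S \ | \ f(x) = f_*\}, \qquad f_* := \inf_{x \in S} f(x);
\]
this set is closed in $\mathbb{R}^n$, so its compactness is equivalent to its boundedness, and $A \ne \emptyset$ is precisely the attainment of the infimum. By Theorem~\ref{Theorem33} the first two listed conditions are equivalent to $A \ne \emptyset$, and when $A \ne \emptyset$ the KKT theorem (Theorem~\ref{KKTOptimalityConditions}) forces $A \subseteq \Sigma(f, S)$ and $f_* = \min_{x \in \Sigma(f, S)} f(x)$. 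The remaining task is therefore: under the first two conditions, to show that the third condition is equivalent to the boundedness of $A$.

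For the sufficiency direction, I would suppose all three conditions hold and, towards a contradiction, that $A$ is unbounded. Choose $x^k \in A$ with $\|x^k\| \to +\infty$; the sandwich
\[
f_* \ \le \ \psi(\|x^k\|) \ \le \ f(x^k) \ = \ f_*
\]
yields $\psi(\|x^k\|) = f_*$ for all large $k$. By Lemma~\ref{Lemma24}(iii) there is a single index $k_0 \in \{1,\ldots,p\}$ with $\psi \equiv f_{k_0}$ on $(R, +\infty)$, so $f_{k_0}$ attains the value $f_*$ at the (pairwise distinct, after passing to a subsequence) arguments $\|x^k\|$. The strict monotonicity clause of Lemma~\ref{Lemma23}(ii) rules out $k_0 \in K$, so $k_0 \notin K$ and $\lambda_{k_0} = f_* = \min_{x \in \Sigma(f, S)} f(x)$. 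Hence $\min_{k \notin K} \lambda_k \le \lambda_{k_0} = \min_{x \in \Sigma(f, S)} f(x)$, contradicting condition (iii).

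For the necessity direction, I would suppose $A$ is nonempty compact and that condition (iii) fails; the first two conditions then come from Theorem~\ref{Theorem33}. Negating (iii) produces an index $k_0 \notin K$ with $\lambda_{k_0} \le \min_{x \in \Sigma(f, S)} f(x) = f_*$. Because $k_0 \notin K$, Lemma~\ref{Lemma23}(iii) tells us that $\Gamma_{k_0} \subseteq \Sigma(f, S)$ and $f \equiv \lambda_{k_0}$ on $\Gamma_{k_0}$; in particular $\lambda_{k_0} \in f(\Sigma(f, S))$, which forces $\lambda_{k_0} = f_*$. But then the unbounded semi-algebraic set $\Gamma_{k_0}$ is entirely contained in $A$, contradicting compactness of $A$.

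The main obstacle I foresee is pinning down the correct single index $k_0$ realizing $\psi$ globally at infinity through Lemma~\ref{Lemma24}(iii); once this is done, the rigid dichotomy \emph{constant versus strictly monotone} from Lemma~\ref{Lemma23} produces both contradictions cleanly. Edge cases such as $K = \{1,\ldots,p\}$ (so that $\Sigma(f,S)$ is compact and $\min_{k \notin K} \lambda_k = +\infty$ by convention) are automatically handled, since then $A \subseteq \Sigma(f,S)$ is bounded and condition (iii) holds vacuously.
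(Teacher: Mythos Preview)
Your proof is correct and follows precisely the route the paper intends: the paper merely records that the corollary ``is a direct consequence of Theorem~\ref{Theorem33} and Lemma~\ref{Lemma23}(iii),'' and you have carefully unpacked this by using Theorem~\ref{Theorem33} for attainment, Lemma~\ref{Lemma23}(iii) to identify $\Gamma_{k_0}\subseteq\Sigma(f,S)$ (and conversely), together with the auxiliary Lemmas~\ref{Lemma23}(ii) and~\ref{Lemma24}(iii) to handle the boundedness analysis of the solution set. The edge case $K=\{1,\ldots,p\}$ is also handled correctly via the empty-minimum convention.
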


\begin{proof}
This is a direct consequence of Theorem~\ref{Theorem33} and Lemma~\ref{Lemma23}(iii).
\end{proof}

Recall that the set $T_\infty(f, S)$ of tangency values at infinity of $f$ on $S$ is a (possibly empty) finite set in $\mathbb{R}$ (see Corollary~\ref{Corollary21}). Furthermore, we have

\begin{theorem} \label{Theorem34}
Suppose that $f$ is bounded from below on $S.$ Then $f$ attains its infimum on $S$ if, and only if, it holds that
\begin{eqnarray*}
\Sigma(f, S) \ne \emptyset \quad \textrm{ and } \quad \min_{x \in \Sigma(f, S)} f(x) & \le & \min_{\lambda \in T_\infty(f, S)} \lambda.
\end{eqnarray*}
\end{theorem}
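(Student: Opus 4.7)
The plan is to deduce Theorem~\ref{Theorem34} directly from Theorem~\ref{Theorem33} by verifying that, under the standing hypothesis that $f$ is bounded from below on $S$ together with $\Sigma(f, S) \ne \emptyset$, the inequality
\[
\min_{x \in \Sigma(f, S)} f(x) \;\le\; \min_{k \in K} \lambda_k
\]
used in Theorem~\ref{Theorem33} is equivalent to the inequality
\[
\min_{x \in \Sigma(f, S)} f(x) \;\le\; \min_{\lambda \in T_\infty(f, S)} \lambda
\]
appearing in the present statement. Once this equivalence is established, Theorem~\ref{Theorem34} is immediate.

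To set things up I would collect three facts from Section~\ref{Section2}. First, by Theorem~\ref{Theorem31}(i) the boundedness below of $f$ on $S$ forces $\lambda_k \in \mathbb{R} \cup \{+\infty\}$ for every $k \in \{1,\ldots,p\}$. Second, by Lemma~\ref{Lemma23}(iii), for each $k \notin K$ the component $\Gamma_k$ is contained in $\Sigma(f, S)$, so $\lambda_k \in f(\Sigma(f, S))$ is finite and
\[
\lambda_k \;\ge\; \min_{x \in \Sigma(f, S)} f(x).
\]
Third, Corollary~\ref{Corollary21} identifies $T_\infty(f, S) = \{\lambda_k \mid 1 \le k \le p\} \cap \mathbb{R}$, that is, the tangency values at infinity are exactly the real-valued $\lambda_k$'s.

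For the forward direction, I would pick any $\lambda \in T_\infty(f, S)$, write $\lambda = \lambda_k$, and split cases: if $k \in K$, the hypothesis $\min_{x \in \Sigma(f, S)} f(x) \le \min_{k \in K} \lambda_k$ yields $\lambda \ge \min_{x \in \Sigma(f, S)} f(x)$; if $k \notin K$, the same bound follows from the second item above. Taking the minimum over $\lambda \in T_\infty(f, S)$ gives the second inequality. For the reverse direction, I would pick any $k \in K$: if $\lambda_k$ is finite it lies in $T_\infty(f, S)$ and the hypothesis of Theorem~\ref{Theorem34} applies; if $\lambda_k = +\infty$ the inequality $\min_{x \in \Sigma(f, S)} f(x) \le \lambda_k$ is automatic, since Lemma~\ref{Lemma21} makes $\min_{x \in \Sigma(f, S)} f(x)$ a finite real number. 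Taking the minimum over $k \in K$ then returns the first inequality, and Theorem~\ref{Theorem33} concludes that $f$ attains its infimum on $S$.

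Since the argument is essentially a rearrangement of material already in Section~\ref{Section2}, I do not anticipate a substantive obstacle. The only point deserving care is the book-keeping for those indices $k \in K$ whose $\lambda_k$ equals $+\infty$ (and which therefore do \emph{not} contribute to $T_\infty(f, S)$), together with the convention $\min \emptyset = +\infty$ covering the degenerate situation $T_\infty(f, S) = \emptyset$; it is precisely this observation that makes the replacement of $\min_{k \in K} \lambda_k$ by $\min_{\lambda \in T_\infty(f, S)} \lambda$ legitimate in the presence of the boundedness-below hypothesis.
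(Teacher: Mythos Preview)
Your proposal is correct and follows essentially the same route as the paper: both reduce the statement to Theorem~\ref{Theorem33} by using Lemma~\ref{Lemma23}(iii) (so that $\lambda_k \in f(\Sigma(f,S))$ for $k \notin K$) together with Corollary~\ref{Corollary21} and the boundedness-below hypothesis to identify $\min_{\lambda \in T_\infty(f,S)} \lambda$ with the relevant minimum of the $\lambda_k$'s. The only cosmetic difference is that the paper passes through the intermediate quantity $\min_{k=1,\ldots,p}\lambda_k$, whereas you argue the equivalence of the two inequalities directly by a case split; your handling of the possibility $\lambda_k = +\infty$ and the convention $\min\emptyset = +\infty$ is, if anything, slightly more explicit than the paper's.
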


\begin{proof}
Indeed, by Lemma~\ref{Lemma23}(iii), $\lambda_k \in f(\Sigma(f, S))$ for all $k \not \in K.$ Consequently,
we obtain
\begin{eqnarray*}
\min_{x \in \Sigma(f, S)} f(x) & \le & \min_{k \in K} \lambda_k \quad \Longleftrightarrow \quad
\min_{x \in \Sigma(f, S)} f(x) \ \le \ \min_{k = 1, \ldots, p} \lambda_k.
\end{eqnarray*}
On the other hand, since $f$ is bounded from below on $S,$ it follows from Corollary~\ref{Corollary21} that
\begin{eqnarray*}
\min_{\lambda \in T_\infty(f, S)} \lambda &=& \min_{k = 1, \ldots, p} \lambda_k.
\end{eqnarray*}
Now, applying Theorem~\ref{Theorem33}, we get the desired conclusion.
\end{proof}

\subsection{Compactness of sublevel sets}

Recall that the function $\psi \colon (R, +\infty) \rightarrow \mathbb{R}, t \mapsto \psi(t),$ is defined by 
$$\psi(t) := \min_{x \in S \cap \mathbb{S}_t} f(x).$$
In view of Lemmas~\ref{Lemma23} and \ref{Lemma24}, the function $\psi$ is either constant or strictly monotone. Consequently, the following limit exists:
\begin{eqnarray*}
\lambda_* &:=& \lim_{t \to + \infty} \psi(t).
\end{eqnarray*}
We also note from Lemma~\ref{Lemma25} that
\begin{eqnarray*}
\lambda_* &=& \min_{k = 1, \ldots, p} \lambda_k.
\end{eqnarray*}

For each $\lambda \in \mathbb{R},$ we write 
\begin{eqnarray*}
\mathscr{L}(\lambda) &:=& \{x \in S \ | \ f(x) \le \lambda\}.
\end{eqnarray*} 
It is easy to see that if $\mathscr{L}(\lambda)$ is nonempty compact for some $\lambda,$ then the infimum $\inf_{x \in S} f(x)$ of $f$ on $S$ is finite and attained. 

\begin{theorem} \label{Theorem35}
Suppose that $f$ is bounded from below on $S.$ The following statements hold:
\begin{enumerate}
\item[{\rm (i)}] If $\lambda > \lambda_*$ then $\mathscr{L}(\lambda)$ is unbounded.
\item[{\rm (ii)}] If $\lambda < \lambda_*$ then $\mathscr{L}(\lambda)$ is compact.
\item[{\rm (iii)}] Assume that $\lambda = \lambda_*.$ Then $\mathscr{L}(\lambda)$ is compact if, and only if, the function
$\psi$ is strictly decreasing.
\end{enumerate}
\end{theorem}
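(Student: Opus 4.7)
The plan is to reduce all three items to the behaviour of the function $\psi(t) = \min_{x \in S \cap \mathbb{S}_t} f(x)$ and its limit $\lambda_*$. Since $f$ is continuous and $S$ is closed, each set $\mathscr{L}(\lambda)$ is automatically closed, so compactness is equivalent to boundedness. The central observation I would use is the equivalence
\[
\mathscr{L}(\lambda) \cap \mathbb{S}_t \ne \emptyset \quad \Longleftrightarrow \quad \psi(t) \le \lambda \qquad (t > R),
\]
which follows from the attainment of the minimum defining $\psi(t)$ on the compact set $S \cap \mathbb{S}_t$.

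For item~(i), I would invoke $\psi(t) \to \lambda_*$: since $\lambda > \lambda_*$, we have $\psi(t) < \lambda$ for all $t$ large enough, so the displayed equivalence produces points of $\mathscr{L}(\lambda)$ of arbitrarily large norm. For item~(ii), symmetrically, $\lambda < \lambda_*$ forces some threshold $T > R$ with $\psi(t) > \lambda$ for $t > T$; hence $\mathscr{L}(\lambda) \cap \{x \in S \mid \|x\| > T\} = \emptyset$, so $\mathscr{L}(\lambda) \subset \mathbb{B}_T$. Both items require only the one-variable limit behaviour and no further structural input.

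The substantive case is item~(iii), where the monotonicity trichotomy does the work. Lemma~\ref{Lemma23}(ii) together with Lemma~\ref{Lemma24}(iii) tells me that $\psi$ is itself either constant or strictly monotone on $(R, +\infty)$, with limit $\lambda_*$. I would split into three subcases. If $\psi$ is strictly decreasing, then $\psi(t) > \lambda_* = \lambda$ for every $t > R$, so the above equivalence forces $\mathscr{L}(\lambda) \subset \mathbb{B}_R$ and hence $\mathscr{L}(\lambda)$ is compact. If $\psi$ is constant equal to $\lambda_*$, then $\psi(t) = \lambda$ for every $t > R$, so every sphere $\mathbb{S}_t$ with $t > R$ contributes at least one point to $\mathscr{L}(\lambda_*)$; and if $\psi$ is strictly increasing with limit $\lambda_*$, then $\psi(t) < \lambda_*$ for every $t > R$, and the same conclusion follows. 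In both latter subcases $\mathscr{L}(\lambda_*)$ is unbounded.

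I do not foresee any serious obstacle: the heavy lifting is already in Lemmas~\ref{Lemma23} and~\ref{Lemma24}, which reduce the question to a monotonicity analysis of one semi-algebraic function in one variable. The only point requiring care is the systematic use of attainment in the definition of $\psi$, which is what converts inequalities about $\psi(t)$ into actual sphere-intersecting points of $\mathscr{L}(\lambda)$ and vice versa.
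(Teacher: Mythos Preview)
Your proposal is correct and follows essentially the same approach as the paper: both reduce the question to the one-variable behaviour of $\psi$ and its limit $\lambda_*$, invoking the monotonicity established in Lemmas~\ref{Lemma23} and~\ref{Lemma24} for item~(iii). The differences are only organisational---you package the argument through the explicit equivalence $\mathscr{L}(\lambda)\cap\mathbb{S}_t\ne\emptyset \iff \psi(t)\le\lambda$ and argue (ii) directly and (iii) by a three-case split, whereas the paper uses the $f_k$ in (i), argues (ii) by contradiction, and proves the biconditional in (iii) by two implications---but the underlying logic is identical.
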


\begin{proof}
(i) Assume that $\lambda > \lambda_* = \min_{k = 1, \ldots, p}\lambda_k.$ Then there exists an index $k \in \{1, \ldots, p\}$ such that $f_k(t) < \lambda$ for all $t$ large enough. By definition,
for each $t > R,$ there exists $\phi(t) \in \Gamma_k \cap \mathbb{S}_t$ such that $f_k(t) = f(\phi(t)).$
Hence $\phi(t) \in \mathscr{L}(\lambda)$ for sufficiently large $t,$ which yields $\mathscr{L}(\lambda)$ is unbounded.

(ii) Assume that $\lambda < \lambda_*.$ By contradiction, suppose that $\mathscr{L}(\lambda)$ is unbounded. Then for all sufficiently large $t,$ the set $\mathscr{L}(\lambda) \cap \mathbb{S}_t$ is not empty and it holds that
\begin{eqnarray*}
\psi(t) &=& \min_{x \in S \cap \mathbb{S}_t} f(x) \ = \ \min_{x \in \mathscr{L}(\lambda) \cap \mathbb{S}_t} f(x)  \ \le \ \lambda.
\end{eqnarray*}
This implies that
\begin{eqnarray*}
\lambda_* &=& \lim_{t \to + \infty} \psi(t) \ \le \ \lambda,
\end{eqnarray*}
which contradicts our assumption.

(iii) Assume that $\lambda = \lambda_*.$ We first assume that $\mathscr{L}(\lambda)$ is compact. Then, for all $t$ large enough we have
\begin{eqnarray*}
\psi(t) & = & \min_{x \in S \cap \mathbb{S}_t} f(x) \ > \ \lambda \ = \ \lambda_*,
\end{eqnarray*}
Since $\lim_{t \to + \infty} \psi(t) = \lambda_*,$ it follows that the function $\psi$ is strictly decreasing.

Conversely, assume that $\mathscr{L}(\lambda)$ is unbounded. Then for all $t$ large enough, the set $\mathscr{L}(\lambda) \cap \mathbb{S}_t$ is not empty, and so
\begin{eqnarray*}
\psi(t) & = & \min_{x \in S \cap \mathbb{S}_t} f(x) \ = \  \min_{x \in \mathscr{L}(\lambda) \cap \mathbb{S}_t} f(x)  \ \le \ \lambda \ = \  \lambda_*.
\end{eqnarray*}
Hence, the function $\psi$ is either constant $\lambda_*$ or strictly increasing.
\end{proof}

\subsection{Coercivity}

In this subsection we give necessary and sufficient conditions for the coercivity of $f$ on $S.$ Here and in the following, we say that $f$ is {\em coercive on} $S$ if for every sequence $\{x^k\} \subset S$ such that $\|x^k\| \to +\infty,$ we have $f(x^k) \to +\infty.$ It is well known that if $f$ is coercive on $S$ then all sublevel sets of $f$ on $S$ are compact, and so $f$ achieves its infimum on $S.$

\begin{theorem} \label{Theorem36}
The following statements are equivalent:

\begin{enumerate}
\item[{\rm (i)}] The function $f$ is coercive on $S.$
\item[{\rm (ii)}] $K = \{1, \ldots, p\}$ and $\alpha_k > 0$ and $a_k > 0$ for all $k = 1, \ldots, p.$
\item[{\rm (iii)}] $\lambda_* = + \infty.$
\item[{\rm (iv)}] The function $f$ is bounded from below on $S$ and $T_\infty(f, S) = \emptyset.$
\end{enumerate}
\end{theorem}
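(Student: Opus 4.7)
The plan is to establish the equivalences by routing everything through (iii), using $\lambda_* = \min_{k=1,\ldots,p}\lambda_k$ from Lemma~\ref{Lemma25} as the central identity. I will prove (ii)$\Leftrightarrow$(iii), then (iii)$\Rightarrow$(i)$\Rightarrow$(iv)$\Rightarrow$(iii), which closes the loop.

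First I would treat (ii)$\Leftrightarrow$(iii). By Lemma~\ref{Lemma25}, $\lambda_* = +\infty$ is equivalent to $\lambda_k = +\infty$ for every $k \in \{1,\ldots,p\}$. For $k \notin K$, the function $f_k$ is constant, so $\lambda_k$ is finite; hence $\lambda_k = +\infty$ for all $k$ forces $K = \{1,\ldots,p\}$. For $k \in K$, inserting the expansion $f_k(t) = a_k t^{\alpha_k} + \textrm{lower order}$ shows that $\lim_{t\to+\infty} f_k(t) = +\infty$ precisely when $\alpha_k > 0$ and $a_k > 0$. This gives the equivalence.

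Next I would prove (iii)$\Rightarrow$(i): for any sequence $\{x^k\} \subset S$ with $\|x^k\| \to +\infty$, once $\|x^k\| > R$ we have $f(x^k) \ge \psi(\|x^k\|)$ by definition of $\psi$, and the right-hand side tends to $\lambda_* = +\infty$. The implication (i)$\Rightarrow$(iv) is then essentially definitional: coercivity immediately forces $f$ to be bounded from below on $S$ (the sublevel set $\{f \le c\}$ lies in some bounded ball, so $f$ is bounded below on the complement and attains a minimum on the compact intersection with $S$), and it forbids the existence of any sequence $x^k \in \Gamma(f,S) \subset S$ with $\|x^k\| \to +\infty$ and $f(x^k)$ having a finite limit, so $T_\infty(f,S) = \emptyset$.

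Finally I would close the loop with (iv)$\Rightarrow$(iii). Boundedness from below, together with Theorem~\ref{Theorem31}(i), gives $\lambda_* = \min_k \lambda_k > -\infty$, so no $\lambda_k$ equals $-\infty$. By Corollary~\ref{Corollary21}, $T_\infty(f,S) = \{\lambda_k : k=1,\ldots,p\} \cap \mathbb{R}$; the assumption $T_\infty(f,S) = \emptyset$ therefore forces each $\lambda_k \in \{\pm\infty\}$, and combined with $\lambda_k > -\infty$ we conclude $\lambda_k = +\infty$ for every $k$, i.e., $\lambda_* = +\infty$. None of the steps look difficult; the only care needed is in (i)$\Rightarrow$(iv) to distinguish between the hypothesis that $T_\infty(f,S)$ be empty (no finite tangency values at infinity) and the possibility that $f$ tends to $+\infty$ along $\Gamma(f,S)$ — which is exactly why coercivity is the right condition, as it rules out finite limit values along \emph{any} sequence escaping to infinity, in particular along $\Gamma(f,S)$.
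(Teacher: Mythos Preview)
Your proof is correct and uses essentially the same ingredients as the paper's argument (Lemma~\ref{Lemma25}, Corollary~\ref{Corollary21}, Theorem~\ref{Theorem31}, and the Puiseux expansions of the $f_k$). The only difference is the order of the cycle: the paper proves (i)$\Rightarrow$(ii)$\Rightarrow$(iii)$\Rightarrow$(iv)$\Rightarrow$(i), whereas you route through (iii) with a direct argument (iii)$\Rightarrow$(i) via $f(x)\ge\psi(\|x\|)$ instead of the paper's contrapositive (iv)$\Rightarrow$(i); this is a cosmetic rearrangement rather than a different idea.
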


\begin{proof}
(i) $\Rightarrow$ (ii): By definition, $K = \{1, \ldots, p\}$ and $\alpha_k > 0$ for all $k = 1, \ldots, p.$ In view of Theorem~\ref{Theorem32}, then $a_k > 0$ for $k = 1, \ldots, p.$

(ii) $\Rightarrow$ (iii): This follows immediately from the definitions and the fact that $\lambda_* = \min_{k = 1, \ldots, p}\lambda_k.$

(iii) $\Rightarrow$ (iv): Since $\lim_{t \to + \infty} \psi(t) = \lambda_* = +\infty,$ it follows from Lemma~\ref{Lemma25} and Theorem~\ref{Theorem31} that $f$ is bounded from below on $S.$ Moreover, we have $T_\infty(f, S) = \emptyset,$ which follows from Corollary~\ref{Corollary21}.

(iv) $\Rightarrow$ (i): By contradiction, assume that $f$ is not coercive on $S.$ Then the limit
\begin{eqnarray*}
\lambda_* &:=& \lim_{t \to +\infty} \psi(t)
\end{eqnarray*}
is finite because $f$ is bounded from below on $S.$ On the other hand, in view of Lemma~\ref{Lemma25}, $\lambda_* = \lambda_k = \lim_{t \to +\infty}  f_k(t)$ for some $k \in \{1, \ldots, p\}.$ By Corollary~\ref{Corollary21}, then $\lambda_* \in T_\infty(f, S),$ which contradicts to the assumption that $T_\infty(f, S) = \emptyset.$
\end{proof}

\subsection{Stability}
In this subsection, we show some stability properties for semi-algebraic functions.

Given two numbers $\epsilon > 0$ and $\alpha \in \mathbb{R},$ let $\mathcal{F}_{\epsilon, \alpha}(S)$ denote the set of all functions $g \colon S \rightarrow \mathbb{R},$ for which there exists $R' > 0$ such that
$$|g(x)| \le \epsilon \|x\|^{\alpha} \quad \textrm{ for any } \quad x \in S \quad \textrm{ and } \quad \|x\| \ge R'.$$

\begin{remark}{\rm
Note that for any $\epsilon > 0$ and any  $\alpha \in \mathbb{R},$ the set $\mathcal{F}_{\epsilon, \alpha}(S)$ is nonempty. For example, it is easy to see that the function $g \colon S \rightarrow \mathbb{R}, x \mapsto e^{-\|x\|},$ belongs to any $\mathcal{F}_{\epsilon, \alpha}(S).$
}\end{remark}

Recall that for each $k \in K,$ we have asymptotically as $t \to +\infty,$
\begin{eqnarray*}
f_k(t) &=& a_k t^{\alpha_k} + \textrm{ lower order terms in } t,
\end{eqnarray*}
where $a_k \in \mathbb{R} \setminus \{0\}$ and $\alpha_k \in \mathbb{Q}.$ Let
\begin{eqnarray*}
\alpha_* &:=& \min_{k = 1, \ldots, p} \alpha_k,
\end{eqnarray*}
where $\alpha_k := 0$ for $k \not \in K.$ The following result gives a stability property for the boundedness from below of semi-algebraic functions.

\begin{theorem} \label{Theorem37}
Assume that $f$ is bounded from below on $S.$ The following two assertions hold:
\begin{enumerate}
\item[{\rm (i)}]  There exists $\epsilon > 0$ such that for all $\alpha \le \alpha_*$ and all $g \in \mathcal{F}_{\epsilon, \alpha}(S),$ the function $f + g$ is bounded from below on $S.$

\item[{\rm (ii)}] For all $\epsilon > 0$ and all $\alpha > \max\{0, \alpha_*\},$ there exists a semi-algebraic continuous function $g \in \mathcal{F}_{\epsilon, \alpha}(S)$ such that the function $f + g$ is not bounded from below on $S.$
\end{enumerate}
\end{theorem}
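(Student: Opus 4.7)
The plan is to reduce everything to a one-variable problem via the radial profile $\psi(t) := \min_{x \in S \cap \mathbb{S}_t} f(x)$, whose asymptotics are controlled by Lemma~\ref{Lemma24} together with the expansions $f_k(t) = a_k t^{\alpha_k} + \textrm{lower order terms}$. For part~(i), I would first fix $R'' \ge \max(R, R', 1)$ and observe that for any $x \in S$ with $\|x\| = t \ge R''$, any $\alpha \le \alpha_*$, and any $g \in \mathcal{F}_{\epsilon,\alpha}(S)$,
\[
(f + g)(x) \ \ge \ \psi(t) - \epsilon t^\alpha \ \ge \ \psi(t) - \epsilon t^{\alpha_*},
\]
the second inequality using $\alpha \le \alpha_*$ and $t \ge 1$. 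This reduces the task to producing a single $\epsilon > 0$, depending only on $f$ and $S$, that makes $\psi(t) - \epsilon t^{\alpha_*}$ bounded below on $[R'', +\infty)$. By Lemma~\ref{Lemma24}(iii) one has $\psi \equiv f_{k_0}$ for some $k_0$, hence $\alpha_{k_0} \ge \alpha_*$.

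The case analysis that follows splits on the sign of $\alpha_*$. If $\alpha_* > 0$, then $f$ is coercive by Theorem~\ref{Theorem36}, so $\alpha_{k_0} > 0$, and Theorem~\ref{Theorem32}(i) forces $a_{k_0} > 0$; setting $\epsilon := a_{k_0}/2$ and handling the sub-cases $\alpha_{k_0} = \alpha_*$ and $\alpha_{k_0} > \alpha_*$ separately, the leading term of $\psi$ dominates $\epsilon t^{\alpha_*}$ so that $\psi(t) - \epsilon t^{\alpha_*} \to +\infty$. If $\alpha_* \le 0$, then $\epsilon t^{\alpha_*}$ is uniformly bounded on $[R'', +\infty)$ while $\psi \ge \inf_S f > -\infty$, so any $\epsilon > 0$ works. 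On the complementary compact set $S \cap \mathbb{B}_{R''}$, continuity of $f + g$ gives a trivial lower bound, and (i) follows.

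For part~(ii), given $\epsilon > 0$ and $\alpha > \max\{0, \alpha_*\}$, I would pick a rational $q$ with $\max\{0, \alpha_*\} < q < \alpha$ and take $g(x) := -\epsilon \|x\|^q$. This $g$ is continuous and semi-algebraic (since $q \in \mathbb{Q}$), and $|g(x)| = \epsilon \|x\|^q \le \epsilon \|x\|^\alpha$ whenever $\|x\| \ge 1$, so $g \in \mathcal{F}_{\epsilon,\alpha}(S)$. Choosing an index $k$ with $\alpha_k = \alpha_*$ and evaluating along $\Gamma_k$, for $x \in \Gamma_k \cap \mathbb{S}_t$ one has $(f + g)(x) = f_k(t) - \epsilon t^q$: if $k \in K$, then $f_k(t) \sim a_k t^{\alpha_*}$ is outpaced by $-\epsilon t^q$ since $q > \alpha_*$; if $k \notin K$, then $f_k$ is constant and $-\epsilon t^q \to -\infty$ directly. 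Either way $(f + g)(x) \to -\infty$ as $t \to +\infty$ along $\Gamma_k$, so $f + g$ is not bounded below on $S$. The main obstacle is the uniformity in~(i): one must ensure that a single $\epsilon$ works for every $\alpha \le \alpha_*$ and every admissible $g$ at once, which hinges on identifying the asymptotic regime of $\psi$ and on using that boundedness from below of $f$ fixes the sign of $a_{k_0}$ precisely when $\alpha_{k_0} > 0$.
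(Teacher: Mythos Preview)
Your proposal is correct and follows essentially the same strategy as the paper: reduce to the radial profile $\psi$ via Lemma~\ref{Lemma24}, split on the sign of $\alpha_*$, and for part~(ii) test $f+g$ along a branch $\Gamma_k$ realizing $\alpha_k = \alpha_*$ with $g(x) = -\epsilon\|x\|^{\beta}$ for a suitable rational exponent. The only cosmetic differences are that the paper bounds all the $f_k$ simultaneously by $c\,t^{\alpha_*}$ (rather than singling out the index $k_0$ with $\psi \equiv f_{k_0}$), and it does not route through Theorem~\ref{Theorem36} for the case $\alpha_* > 0$; your appeal to continuity of $g$ on the compact part $S \cap \mathbb{B}_{R''}$ is also an assumption the paper tacitly makes.
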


\begin{proof}
(i) The claim is clear in the case $\alpha_* \le 0.$ So assume that $\alpha_* > 0.$ Then all the functions $f_k$ are not constant, i.e., $K = 1, \ldots, p.$
By Theorem~\ref{Theorem32}, $\alpha_k > 0$ and $a_k > 0$ for all $k = 1, \ldots, p.$ Hence, there exist constants $c > 0$ and $R' > R$ such that for $k = 1, \ldots, p,$
\begin{eqnarray*}
f_k(t) &\ge& c\, t^{\alpha_*} \quad \textrm{ for all } \quad t \ge R'.
\end{eqnarray*}
Consequently, we have for all $x \in S$ with $\|x\| \ge R',$
\begin{eqnarray*}
f(x) &\ge& \min_{y \in S, \ \|y\| = \|x\|}f(y) \ = \ \psi(\|x\|) \ = \ \min_{k = 1, \ldots, p} f_k(\|x\|) \ \ge \ c\, \|x\| ^{\alpha_*}.
\end{eqnarray*}
Let $\epsilon := \frac{c}{2}.$ Take any $\alpha \le \alpha_*$ and let $g \colon S \rightarrow \mathbb{R}$ be a continuous function such that
$$|g(x)| \le \epsilon\, \|x\|^{\alpha} \quad \textrm{ for any } \quad x \in S \quad \textrm{ and } \quad \|x\| \ge R'.$$
We have for all $x \in S$ with $\|x\| \ge R',$
\begin{eqnarray*}
f(x) + g(x) &\ge& c\, \|x\| ^{\alpha_*} - \epsilon \, \|x\|^{\alpha}
\ \ge \ c \, \|x\| ^{\alpha_*} - \epsilon \,\|x\|^{\alpha_*}
\ = \ \epsilon \, \|x\|^{\alpha_*}.
\end{eqnarray*}
Clearly, this implies that the function $f + g$ is bounded from below on $S.$

(ii) Let $\epsilon > 0$ and $\alpha > \max\{0, \alpha_*\}.$ Take any rational number $\beta$ with $\alpha \ge \beta > \max\{0, \alpha_*\}.$
Define the function $g \colon S \rightarrow \mathbb{R}$ by $g(x) := - \epsilon \|x\|^\beta.$ Then $g$ is semi-algebraic continuous and belongs to $\mathcal{F}_{\epsilon, \alpha}(S).$

On the other hand, it is not hard to see that there exists a curve $\phi \colon (R, +\infty) \rightarrow \Gamma(f, S)$ such that $\| \phi(t) \| = t$ and asymptotically as $t \to +\infty,$
$$f(\phi(t)) = c\, t^{\alpha_*} + \textrm{ lower order terms in } t$$
for some $c \in \mathbb{R}.$ It follows that
\begin{eqnarray*}
f(\phi(t)) + g(\phi(t)) & = & - \epsilon\, t^{\beta}  + c\, t^{\alpha_*} + \textrm{ lower order terms in } t,
\end{eqnarray*}
which tends to $-\infty$ as $t$ tends to $+\infty.$ Hence, the function $f + g$ is not bounded from below on $S.$
\end{proof}

The next result gives a stability criterion for the coercivity of semi-algebraic functions.

\begin{theorem}
Assume that $f$ is coercive on $S.$ The following two assertions hold:
\begin{enumerate}
\item[{\rm (i)}] There exists $\epsilon > 0$ such that for all $\alpha \le \alpha_*$ and all $g \in \mathcal{F}_{\epsilon, \alpha}(S),$ the function $f + g$ is coercive on $S.$
\item[{\rm (ii)}] For all $\epsilon > 0$ and all $\alpha > \alpha_*,$ there exists a semi-algebraic continuous function $g \in \mathcal{F}_{\epsilon, \alpha}(S)$ such that the function $f + g$ is not coercive on $S.$
\end{enumerate}
\end{theorem}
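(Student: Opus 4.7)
The plan is to mirror the structure of Theorem~\ref{Theorem37} and exploit the fact that coercivity of $f$ on $S$ forces a clean lower bound $f(x) \gtrsim \|x\|^{\alpha_*}$ at infinity, with $\alpha_* > 0$. More precisely, Theorem~\ref{Theorem36} tells us that $K = \{1,\ldots,p\}$, and that $\alpha_k > 0$ and $a_k > 0$ for every $k$, so $\alpha_* = \min_k \alpha_k > 0$. From the asymptotic expansion $f_k(t) = a_k t^{\alpha_k} + \text{lower order}$, I will extract a constant $c > 0$ and $R' > R$ such that $f_k(t) \ge c\, t^{\alpha_*}$ for all $t \ge R'$ and all $k = 1,\ldots,p$.

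For item (i), I will set $\epsilon := c/2$. For any $x \in S$ with $\|x\| \ge R'$, Lemma~\ref{Lemma24}(ii) gives
\begin{eqnarray*}
f(x) \;\ge\; \psi(\|x\|) \;=\; \min_{k = 1, \ldots, p} f_k(\|x\|) \;\ge\; c\, \|x\|^{\alpha_*}.
\end{eqnarray*}
Given $\alpha \le \alpha_*$ and $g \in \mathcal{F}_{\epsilon, \alpha}(S)$, and using $\alpha_* > 0$, for $\|x\|$ sufficiently large we have $\|x\|^{\alpha} \le \|x\|^{\alpha_*}$, so
\begin{eqnarray*}
f(x) + g(x) \;\ge\; c\, \|x\|^{\alpha_*} - \epsilon\, \|x\|^{\alpha} \;\ge\; \epsilon\, \|x\|^{\alpha_*} \;\to\; +\infty,
\end{eqnarray*}
which shows $f + g$ is coercive on $S$.

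For item (ii), fix $\epsilon > 0$ and $\alpha > \alpha_*$, and choose a rational $\beta$ with $\alpha_* < \beta \le \alpha$. Define $g \colon S \to \mathbb{R}$ by $g(x) := -\epsilon \|x\|^\beta$; this is semi-algebraic and continuous, and since $\beta \le \alpha$, $g$ belongs to $\mathcal{F}_{\epsilon, \alpha}(S)$. Pick an index $k_0$ realizing $\alpha_{k_0} = \alpha_*$. As noted in the proof of Theorem~\ref{Theorem37}(ii), standard arguments on the tangency stratum $\Gamma_{k_0}$ (Hardt triviality together with the Curve Selection Lemma at infinity) produce a semi-algebraic curve $\phi \colon (R,+\infty) \to \Gamma_{k_0}$ with $\|\phi(t)\| = t$ and $f(\phi(t)) = a_{k_0} t^{\alpha_*} + \text{lower order terms}$. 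Then
\begin{eqnarray*}
f(\phi(t)) + g(\phi(t)) \;=\; a_{k_0} t^{\alpha_*} - \epsilon\, t^{\beta} + \text{lower order} \;\to\; -\infty,
\end{eqnarray*}
since $\beta > \alpha_*$, so $f + g$ is certainly not coercive on $S$.

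No step looks genuinely hard: item (i) is a direct lower-bound chase once one invokes Theorem~\ref{Theorem36} and Lemma~\ref{Lemma24}(ii), and item (ii) only requires exhibiting one curve in $\Gamma_{k_0}$ parametrized by radius. The only minor care needed is the comparison $\|x\|^{\alpha} \le \|x\|^{\alpha_*}$ for $\alpha \le \alpha_*$, which holds for $\|x\|$ large; this is where the coercivity hypothesis (through $\alpha_* > 0$) is essential and is what distinguishes this statement from the analogous stability theorem for mere boundedness.
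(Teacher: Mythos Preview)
Your proposal is correct and follows essentially the same approach as the paper's own proof. The paper is terser---it simply invokes Theorem~\ref{Theorem36} to get $K=\{1,\ldots,p\}$ and $\alpha_*>0$, then defers both items to the argument of Theorem~\ref{Theorem37} (noting for (ii) that $f+g$ is in fact unbounded below, hence not coercive)---but your write-up unpacks exactly those details. One small remark: the inequality $\|x\|^{\alpha}\le\|x\|^{\alpha_*}$ for $\alpha\le\alpha_*$ already holds once $\|x\|\ge 1$, independently of the sign of $\alpha_*$; the place where $\alpha_*>0$ is genuinely needed is the conclusion $\epsilon\,\|x\|^{\alpha_*}\to+\infty$, which upgrades the bounded-below conclusion of Theorem~\ref{Theorem37}(i) to coercivity.
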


\begin{proof}
(i) By Theorem~\ref{Theorem36}, we know that $K = \{1, \ldots, p\}$ and $\alpha_* > 0.$ Then the rest of the proof is analogous to that of Theorem~\ref{Theorem37}.

(ii) Let $\epsilon > 0$ and $\alpha > \alpha_*.$ Take any rational number $\beta$ with $\alpha \ge \beta > \alpha_*.$
Define the function $g \colon S \rightarrow \mathbb{R}$ by $g(x) := - \epsilon \|x\|^\beta.$ Clearly, $g$ is semi-algebraic continuous and belongs to $\mathcal{F}_{\epsilon, \alpha}(S).$ Moreover, as in the proof of Theorem~\ref{Theorem37}, we can see that the function $f + g$ is not bounded from below on $S,$ and so, it is not coercive on $S.$
\end{proof}

We finish this section by noting that it is not true that if $f$ attains its infimum on $S$ then there exists $\epsilon > 0$ such that for all $\alpha \le \alpha_*$ and all $g \in \mathcal{F}_{\epsilon, \alpha}(S),$ the function $f + g$ attains its infimum on $S.$

\begin{example}{\rm
Let $f(x, y) := x^2$ and $S := \mathbb{R}^2.$ Clearly, $f$ is bounded from below and attains its infimum on $S.$ A direct calculation shows that $\alpha_* = 0.$ Furthermore, for all $\epsilon > 0$ and all $\alpha \le 0,$ we have $g(x, y) := \epsilon (1 + \|(x, y)\|)^{\alpha - 1} \in \mathcal{F}_{\epsilon, \alpha}(S)$ and the function $f + g$ is bounded from below but does not attain its infimum on $S.$
}\end{example}

\section{Examples} \label{Section4}

In this section we provide examples to illustrate our main results. For simplicity we consider the case where $S := \mathbb{R}^2$ and $f$ is a polynomial function in two variables $(x, y) \in \mathbb{R}^2.$ By definition, then
\begin{eqnarray*}
\Sigma(f, \mathbb{R}^2)  &:=& \left \{(x, y) \in \mathbb{R}^2 \ | \ \frac{\partial f}{\partial x}  = \frac{\partial f}{\partial y} = 0 \right\}, \\
\Gamma(f, \mathbb{R}^2) &:=& \left \{(x, y) \in \mathbb{R}^2 \ | \ y \frac{\partial f}{\partial x} - x \frac{\partial f}{\partial y} = 0 \right\}.
\end{eqnarray*}

\begin{example}{\rm
Let $f(x, y) := x^3 - 3y^2.$ We have $\Sigma(f, \mathbb{R}^2)   = \{(0, 0)\},$ 
and the tangency variety $\Gamma(f, \mathbb{R}^2)$ is given by the equation:
\begin{eqnarray*}
3x^2y + 6xy &=& 0.
\end{eqnarray*}
Hence, for $R > 2,$ the set $\Gamma(f, \mathbb{R}^2) \setminus \mathbb{B}_R$ has six connected components:
\begin{eqnarray*}
\Gamma_{\pm 1} &:=& \left\{ (0, \pm t) \ | \ t \ge R \right\}, \\
\Gamma_{\pm 2} &:=& \left\{ (-2, \pm t) \ | \ t \ge R \right\}, \\
\Gamma_{\pm 3} &:=& \left\{ (\pm t, 0) \ | \ t \ge R \right\}.
\end{eqnarray*}
Consequently,
\begin{eqnarray*}
f|_{\Gamma_{\pm 1}} &=& -3t^2, \\
f|_{\Gamma_{\pm 2}} &=& - 8 - 3t^2, \\
f|_{\Gamma_{\pm 3}} &=& \pm t^3.
\end{eqnarray*}
It follows that $K = \{\pm 1, \pm 2, \pm 3\}$ and
\begin{eqnarray*}
\lambda_{\pm 1} &=& \lambda_{\pm 2} \ = \ -\infty \quad  \textrm{ and } \quad \lambda_{\pm 3} \ = \ \pm \infty.
\end{eqnarray*}
Therefore, by Theorem~\ref{Theorem31}, $f$ is bounded neither from below nor from above.
}\end{example}

\begin{example}{\rm
Let us consider the Motzkin polynomial (see \cite{Motzkin1967})
$$f({x}, {y}) := {x}^2{y}^4 + {x}^4 {y}^2 - 3 {x}^2{y}^2 + 1,$$
which is nonnegative on ${\Bbb R}^2.$ A simple calculation shows that
\begin{eqnarray*}
\Sigma(f, \mathbb{R}^2)  &=& \{{x} = 0\} \cup \{{y} = 0\} \cup \{(1, 1), (1, -1), (-1, 1), (-1, -1)\},
\end{eqnarray*}
and the tangency variety $\Gamma(f, \mathbb{R}^2)$ is given by the equation:
\begin{eqnarray*}
0 &=& \left( 4\,{{{x}}}^{3}{{ {y}}}^{2}+2\,{ {x}}\,{{ {y}}
}^{4}-6\,{ {x}}\,{{ {y}}}^{2} \right) { {y}}- \left( 2\,{{
 {x}}}^{4}{ {y}}+4\,{{ {x}}}^{2}{{ {y}}}^{3}-6\,{{
{x}}}^{2}{ {y}} \right) { {x}} \\
&=& {x} {y} \left({x}^2 - {y}^2\right) \left(6 - 2({x}^2 + {y}^2) \right).
\end{eqnarray*}
Hence, for $R > \sqrt{3},$ the set $\Gamma(f, \mathbb{R}^2) \setminus \mathbb{B}_R$ has eight connected components:
\begin{eqnarray*}
\Gamma_{\pm 1} &:=& \left\{ (\pm t, 0) \ | \ t \ge R \right\}, \\
\Gamma_{\pm 2} &:=& \left\{ (0, \pm t) \ | \ t \ge R \right\}, \\
\Gamma_{\pm 3} &:=& \left\{ (\pm t, \pm t) \ | \ t \ge R \right\}, \\
\Gamma_{\pm 4} &:=& \left\{ (\pm t, \mp t) \ | \ t \ge R \right\}.
\end{eqnarray*}
Consequently,
\begin{eqnarray*}
f|_{\Gamma_{\pm 1}} &=& \quad f|_{\Gamma_{\pm 2}} \ = \ 1, \\
f|_{\Gamma_{\pm 3}} &=& \quad f|_{\Gamma_{\pm 4}} \ = \ 2t^6 - 3t^4 + 1.
\end{eqnarray*}
It follows that $T_\infty(f, \mathbb{R}^2) = \{1 \},$ $K = \{\pm 3, \pm 4\},$ and
\begin{eqnarray*}
\lambda_{\pm 1} &=& \lambda_{\pm 2} \ = \ 1,\\
\lambda_{\pm 3} &=& \lambda_{\pm 4} \ = \ +\infty,\\
\alpha_{\pm 3} &=& \alpha_{\pm 4} \ = \ 6. 
\end{eqnarray*}
Therefore, in light of Theorems~\ref{Theorem31} and \ref{Theorem33}, $f$ is bounded from below and attains its infimum. By Corollary~\ref{Corollary31}, 
the set of optimal solutions of the problem $\inf_{(x, y) \in \mathbb{R}^2} f(x, y)$ is nonempty compact. In fact, we can see that this set is
$$f^{-1}(0) = \{(1, 1), (1, -1), (-1, 1), (-1, -1)\}.$$
Moreover, from Theorem~\ref{Theorem35} we have 
\begin{itemize}
\item[$\bullet$] If $\lambda < 0,$ then $\mathscr{L}(\lambda)$ is empty;
\item[$\bullet$] If $0 \le \lambda < 1,$ then $\mathscr{L}(\lambda)$ is nonempty compact;
\item[$\bullet$] If $\lambda > 1,$ then $\mathscr{L}(\lambda)$ is non-compact;
\item[$\bullet$] If $\lambda = 1,$ then the set $\mathscr{L}(\lambda)$ is non-compact because the function $\psi$ is constant  $1.$ In fact, $\mathscr{L}(1)$ contains the following unbounded set:
\begin{eqnarray*}
f^{-1}(1) &=& \{{x} = 0 \} \cup \{{y} = 0 \} \cup \{{x}^2 + {y}^2 = 3 \}.
\end{eqnarray*}
\end{itemize}
Finally, by Theorem~\ref{Theorem36}, the polynomial $f$ is not coercive.
}\end{example}

\begin{example}{\rm
Let $f(x, y) := (xy - 1)^2 + y^2$ be the polynomial considered in Example~\ref{Example21}.
Then the tangency variety $\Gamma(f, \mathbb{R}^2)$ is given by the equation:
\begin{eqnarray*}
2( -{x}^{3}y+x{y}^{3}+{x}^{2}-xy-{y}^{2}) \ = \ 0.
\end{eqnarray*}
We can see that\footnote{The computations are performed with the software Maple, using the command ``puiseux'' of the package ``algcurves'' for the rational Puiseux expansions.} for $R$ large enough, the set $\Gamma(f, \mathbb{R}^2) \setminus \mathbb{B}_R$ has eight connected components:
\begin{eqnarray*}
\Gamma_{\pm 1}: && x := t, \qquad y := -t + \frac{1}{2} t^{-1} + \frac{5}{8} t^{-3} + \cdots,\\
\Gamma_{\pm 2}: && x := t, \qquad y :=  t + \frac{1}{2} t^{-1} + \frac{3}{8} t^{-3} + \cdots,\\
\Gamma_{\pm 3}: && x := t, \qquad y :=  t^{-1} - t^{-3} + \cdots,\\
\Gamma_{\pm 4}: && x := t^{-1}, \quad y :=  t + t^{-1} - t^{-3} + \cdots,
\end{eqnarray*}
where $t \to \pm \infty.$ Then substituting these expansions in $f$ we get
\begin{eqnarray*}
f|_{\Gamma_{\pm 1}} &=& t^4 + 4t^2 + 2 - \frac{23}{8}t^{-2} + \cdots,\\
f|_{\Gamma_{\pm 2}}  &=& t^4 + 2 + \frac{5}{8}t^{-2} + \cdots, \\
f|_{\Gamma_{\pm 3}}  &=& t^{-2} - t^{-4} + t^{-6} + \cdots, \\
f|_{\Gamma_{\pm 4}}  &=& t^2 + 2 - t^{-2} - t^{-4} + \cdots.
\end{eqnarray*}
It follows that $T_\infty(f, \mathbb{R}^2) = \{0 \},$ $K = \{\pm 1, \pm 2, \pm 3, \pm 4\},$ and
\begin{eqnarray*}
\lambda_{\pm 1} &=& \lambda_{\pm 2} \ = \ \lambda_{\pm 4}  \ = \ +\infty, \quad \lambda_{\pm 3}  \ = \ 0.
\end{eqnarray*}
In light of Theorem~\ref{Theorem31}, $f$ is bounded from below. Note that $\Sigma(f, \mathbb{R}^2) = \{(0, 0)\}$ and
\begin{eqnarray*}
f(0, 0) & = & 1 \ > \ 0 \ = \ \min_{k = 1, 2, 3, 4} \lambda_{\pm k}.
\end{eqnarray*}
Hence, by Theorem~\ref{Theorem33}, $f$ does not attain its infimum. Furthermore, in view of Corollary~\ref{Corollary22}, we have
\begin{eqnarray*}
\inf_{(x, y) \in \mathbb{R}^2} f(x, y) &=& 0.
\end{eqnarray*}
}\end{example}

\subsection*{Acknowledgments}
The author wishes to thank J\'er\^ome Bolte for the useful discussions. The last version of this paper was partially performed while the author had been visiting
at the Vietnam Institute for Advanced Study in Mathematics (VIASM) from January 1 to 31 March, 2019. He would like to thank the Institute for hospitality and support.

\bibliographystyle{abbrv}

\begin{thebibliography}{10}

\bibitem{Ahmadi2018}
A.~A. Ahmadi and J.~Zhang.
\newblock On the complexity of testing attainment of the optimal value in
  nonlinear optimization.
\newblock https://arxiv.org/pdf/1803.07683.pdf, 2018.

\bibitem{Bajbar2018-2}
T.~Bajbar and S.~Behrends.
\newblock How fast do coercive polynomials grow?
\newblock Technical report, Instituts f\"ur Numerische und Angewandte
  Mathematik, Georg-August-Universit\"at G\"ottingen, 2017.

\bibitem{Bajbar2015}
T.~Bajbar and O.~Stein.
\newblock Coercive polynomials and their {{N}}ewton polytopes.
\newblock {\em SIAM J. Optim.}, 25(3):1542--1570, 2015.

\bibitem{Bajbar2018-1}
T.~Bajbar and O.~Stein.
\newblock Coercive polynomials: stability, order of growth, and {{N}}ewton
  polytopes.
\newblock {\em Optimization}, 2018.

\bibitem{Bochnak1998}
J.~Bochnak, M.~Coste, and M.-F. Roy.
\newblock {\em Real algebraic geometry}, volume~36.
\newblock Springer, Berlin, 1998.

\bibitem{Bolte2018}
J.~Bolte, A.~Hochart, and E.~Pauwels.
\newblock Qualification conditions in semi-algebraic programming.
\newblock {\em SIAM J. Optim.}, 28(3):2131--2151, 2018.

\bibitem{Demmel2007}
J.~Demmel, J.~Nie, and V.~Powers.
\newblock Representations of positive polynomials on noncompact semialgebraic
  sets via {{KKT}} ideals.
\newblock {\em J. Pure Appl. Algebra.}, 209(1):189--200, 2007.

\bibitem{Grandjean2007}
V.~Grandjean.
\newblock On the limit set at infinity of a gradient trajectory of a
  semialgebraic function.
\newblock {\em J. Differential Equations}, 233(1):22--41, 2007.

\bibitem{HaHV2008-2}
H.~V. H\`a and T.~S. Ph\d{a}m.
\newblock Global optimization of polynomials using the truncated tangency
  variety and sums of squares.
\newblock {\em SIAM J. Optim.}, 19(2), 2008.

\bibitem{HaHV2009-1}
H.~V. H\`a and T.~S. Ph\d{a}m.
\newblock Solving polynomial optimization problems via the truncated tangency
  variety and sums of squares.
\newblock {\em J. Pure Appl. Algebra}, 213:2167--2176, 2009.

\bibitem{HaHV2010}
H.~V. H\`a and T.~S. Ph\d{a}m.
\newblock Representation of positive polynomials and optimization on noncompact
  semialgebraic sets.
\newblock {\em SIAM J. Optim.}, 20:3082--3103, 2010.

\bibitem{HaHV2017}
H.~V. H\`a and T.~S. Ph\d{a}m.
\newblock {\em Genericity in polynomial optimization}, volume~3 of {\em Series
  on Optimization and Its Applications}.
\newblock World Scientific, Singapore, 2017.

\bibitem{Hardt1980}
R.~M. Hardt.
\newblock Semi-algebraic local-triviality in semi-algebraic mappings.
\newblock {\em Amer. J. Math.}, 102(2):291--302, 1980.

\bibitem{Jeyakumar2014}
V.~Jeyakumar, J.~B. Lasserre, and G.~Li.
\newblock On polynomial optimization over non-compact semi-algebraic sets.
\newblock {\em J. Optim. Theory Appl.}, 163:707--718, 2014.

\bibitem{Kim2018}
D.~S. Kim, T.~S. Ph\d{a}m, and N.~V. Tuyen.
\newblock On the existence of {{P}}areto solutions for polynomial vector
  optimization problems.
\newblock {\em Math. Program. Ser. A},
  https://doi.org/10.1007/s10107-018-1271-7, 2018.

\bibitem{Kurdyka2000}
K.~Kurdyka, T.~Mostowski, and A.~Parusi\'nski.
\newblock Proof of the gradient conjecture of {{R. T}}hom.
\newblock {\em Ann. of Math. (2)}, 152(3):763--792, 2000.

\bibitem{Lasserre2009}
J.~B. Lasserre.
\newblock {\em Moments, positive polynomials and their applications}.
\newblock Imperial College Press, London, 2010.

\bibitem{Lasserre2015}
J.~B. Lasserre.
\newblock {\em An introduction to polynomial and semi-algebraic optimization}.
\newblock Cambridge University Press, Cambridge, 2015.

\bibitem{Laurent2009}
M.~Laurent.
\newblock Sums of squares, moment matrices and optimization over polynomials.
\newblock In M.~Putinar and S.~Sullivant, editors, {\em Emerging Applications
  of Algebraic Geometry}, volume IMA Math. Appl. 149, pages 157--270. Springer,
  New York, 2009.

\bibitem{TaLL1998}
T.~L. Loi and A.~Zaharia.
\newblock Bifurcation sets of functions definable in o-minimal structures.
\newblock {\em Illinois J. Math.}, 43(3):449--457, 1998.

\bibitem{Marshall2008}
M.~Marshall.
\newblock {\em Positive polynomials and sums of squares}, volume 146 of {\em
  Math. Surveys and Monographs}.
\newblock American Mathematical Society, Providence, RI, 2008.

\bibitem{Motzkin1967}
T.~Motzkin.
\newblock The arithmetic-geometric inequalities.
\newblock In O.~Shisha, editor, {\em Inequalities}, pages 205--224, Academic
  Press, 1967.

\bibitem{Nie2006}
J.~Nie, J.~Demmel, and B.~Sturmfels.
\newblock Minimizing polynomials via sum of squares over the gradient ideal.
\newblock {\em Math. Program. Ser. A}, 106(3):587--606, 2006.

\bibitem{Shor1987}
N.~Z. Shor.
\newblock Class of global minimum bounds of polynomial functions.
\newblock {\em Cybernetics}, 23(6):731--734, 1987.

\bibitem{Spingarn1979}
J.~E. Spingarn and R.~T. Rockafellar.
\newblock The generic nature of optimality conditions in nonlinear programming.
\newblock {\em Math. Oper. Res.}, 4:425--430, 1979.

\end{thebibliography}

\end{document}